\documentclass{amsart}

\usepackage{amssymb,amsmath,color}
\usepackage{amsthm}
\usepackage{url}
\usepackage{tikz}
\usepackage[enableskew]{youngtab}
\usepackage{ytableau,varwidth
}
\allowdisplaybreaks

\def\({\left(}
\def\){\right)}

\newtheorem{theorem}{Theorem}[section]
\newtheorem{corollary}[theorem]{Corollary}

\theoremstyle{definition}

\newtheorem{example}{Example}
\newtheorem{remark}{Remark}

\begin{document}

\title[]{
Refinements of Beck-type partition identities}
\author{T. Amdeberhan}
\address{Department of Mathematics\\ Tulane University\\ New Orleans, LA 70118, USA}
\email{tamdeber@tulane.edu}

\author{G. E. Andrews}
\address{Department of Mathematics\\ Penn State University\\ University Park, PA 16802, USA}
 \email{gea1@psu.edu} 

\author{C. Ballantine}
\address{Department of Mathematics and Computer Science\\ College of the Holy Cross \\ Worcester, MA 01610, USA \\} 
\email{cballant@holycross.edu}

%\subjclass xxx \endsubjclass

\begin{abstract} Franklin's identity generalizes Euler's identity and states that the number of partitions of $n$ with $j$ different parts divisible by $r$ equals the number of partitions of $n$ with $j$ repeated parts. In this article, we give a refinement of Franklin's identity when $j=1$. We prove Franklin's identity when $j=1$, $r=2$ for partitions with fixed perimeter, i.e., fixed largest hook. We also derive a Beck-type  identity  for partitions with fixed perimeter: the excess in the number of parts in all partitions into odd parts with perimeter $M$ over the  number of parts in all partitions into distinct parts with perimeter $M$ equals the number of partitions with perimeter $M$ whose set of even parts is a singleton.  We provide analytic and combinatorial proofs of our results.
\end{abstract}

\maketitle

\section{Introduction}

\noindent
A {\it partition}  of $n\in\mathbb{N}$ is a finite non-increasing sequence of positive integers that add up to $n$. We use the notation $\lambda=(\lambda_1,\lambda_2,\dots,\lambda_{\ell})$, with $\lambda_1\geq\lambda_2\geq \ldots, \lambda_\ell$ and 
$\lambda_1+\lambda_2+\cdots+\lambda_{\ell}=n$. We refer to the numbers  $\lambda_i$ as the {\it parts} of $\lambda$. The {\it size} of $\lambda$, denoted  by
$\vert\lambda\vert$,  is the sum of all its parts.  We write $\lambda\vdash n$ to mean $\lambda$ is a partition of size $n$.   The {\it length} of $\lambda$, denoted by $\ell(\lambda)$, is the number of parts of $\lambda$. For a non-negative integer $n$, 
we denote by $p(n)$ the number of partitions of $n$. Moreover, we write $p(n\mid X)$ for the number of partitions of $n$ satisfying condition $X$. For more details about partitions, we refer the reader to \cite{A98}. 

\smallskip
\noindent
A classical partition identity is an identity of the form $p(n \mid X)=p(n \mid Y)$. One of the oldest partition identity is Euler's identity $$p(n \mid \text{parts are odd})=p(n \mid \text{parts are distinct}).$$ Glaisher's generalization of  Euler's identity states that, for $r\geq 2$,
$$p(n \mid \text{parts are not divisible by $r$})=p(n \mid \text{parts are repeated less than $r$ times}).$$ 
We note that partitions with parts not divisible by $r$ are called {\it $r$-regular partitions}. Franklin generalized Glaisher's identity further. To state the identity, we denote by $\mathcal O(n;r,j)$ the set of partitions of $n$ in which exactly $j$ different parts are divisible by $r$ and these parts can be repeated. We also denote by $\mathcal D(n;r,j)$ the set of partitions of $n$ in which exactly $j$ parts occur at least $r$ times. Then, Franklin's identity states that, for $r\geq 2$ and any non-negative integer $j$, $$|\mathcal O(n;r,j)|=|\mathcal D(n;r,j)|.$$
For additional discussions on partitions and partition identities, again refer to \cite{A98}.

\smallskip
\noindent
In this article, we prove a refinement of Franklin's identity for $j=1$. For a positive integer $u$, we denote by $\mathcal{O}_u(n;r,1)$ the set of partitions $\lambda \in \mathcal{O}(n;r,1)$ such that the unique part of $\lambda$ divisible by $r$ is repeated exactly $u$ times. Moreover, we denote by $\mathcal{D}_u(n;r,1)$ the set of partitions $\lambda \in \mathcal{D}(n;r,1)$ such that the unique part occuring at least $r$ times equals $u$. Let $\alpha_u^{(r)}(u)=|\mathcal{O}_u(n;r,1)|$ and $\beta^{(r)}(u)=|\mathcal{D}_u(n;r,1)|$.
\begin{theorem} \label{T1} Let $u, r$ be integers such that $u \geq 1$ and $r\geq 2$. Then, for any non-negative integer $n$, we have  $$\alpha_u^{(r)}(n)=\beta_u^{(r)}(n).$$ \end{theorem}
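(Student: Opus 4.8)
The plan is to prove Theorem~\ref{T1} twice: first analytically by writing down and matching the two generating functions, then bijectively by grafting a transfer of the distinguished part onto Glaisher's bijection.

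For the analytic proof I would first translate each side into a generating function. A partition in $\mathcal{O}_u(n;r,1)$ is an arbitrary partition into parts \emph{not} divisible by $r$ together with a single part $rk$, $k\ge1$, occurring exactly $u$ times; summing $q^{rku}$ over $k$ and multiplying by the generating function for $r$-regular partitions gives
\[
\sum_{n\ge0}\alpha_u^{(r)}(n)\,q^n=\frac{q^{ru}}{1-q^{ru}}\prod_{\substack{m\ge1\\ r\nmid m}}\frac{1}{1-q^m}.
\]
A partition in $\mathcal{D}_u(n;r,1)$ has the part $u$ occurring at least $r$ times and every other part occurring at most $r-1$ times, so
\[
\sum_{n\ge0}\beta_u^{(r)}(n)\,q^n=\frac{q^{ru}}{1-q^{u}}\prod_{\substack{m\ge1\\ m\ne u}}\frac{1-q^{rm}}{1-q^m}.
\]
The key step is then a short manipulation: using $\prod_{r\nmid m}(1-q^m)^{-1}=\prod_{m\ge1}(1-q^{rm})/\prod_{m\ge1}(1-q^m)$ on the first line, and pulling the $m=u$ factor $\tfrac{1-q^{ru}}{1-q^u}$ out of the product on the second, both expressions collapse to the single product
\[
\frac{q^{ru}}{1-q^{ru}}\cdot\frac{\prod_{m\ge1}(1-q^{rm})}{\prod_{m\ge1}(1-q^m)},
\]
and comparing coefficients of $q^n$ finishes the proof. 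I expect this to be routine bookkeeping of the omitted factors rather than a real obstacle.

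For the combinatorial proof I would build a bijection $\Phi\colon\mathcal{O}_u(n;r,1)\to\mathcal{D}_u(n;r,1)$ on top of Glaisher's bijection $\varphi$ (which sends an $r$-regular partition to one with every multiplicity at most $r-1$, preserving the size, as in the Glaisher identity quoted above). Given $\lambda\in\mathcal{O}_u(n;r,1)$, write $\lambda=\nu\cup\{(rk)^u\}$, where $rk$ is its unique part divisible by $r$ (occurring $u$ times) and $\nu$ is $r$-regular; then set $\Phi(\lambda)=\varphi(\nu)\cup\{u^{rk}\}$, i.e.\ discard the part $rk$ and instead create $rk$ copies of the part $u$. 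Since $rk\ge r$, the part $u$ now occurs at least $r$ times while $\varphi(\nu)$ keeps all multiplicities below $r$, so $\Phi(\lambda)\in\mathcal{D}_u(n;r,1)$. The inverse reads off the multiplicity $a$ of $u$, writes $a=(a\bmod r)+r\lfloor a/r\rfloor$, removes $r\lfloor a/r\rfloor$ copies of $u$, applies $\varphi^{-1}$ to the remainder to recover $\nu$, and restores the part $rk$ with $k=\lfloor a/r\rfloor$ and multiplicity $u$.

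The hard part will be verifying that $\Phi$ and this inverse are genuinely well defined and mutually inverse, which amounts to checking the boundary conditions of the two families. Specifically I must confirm that adjoining $rk$ copies of $u$ cannot accidentally push a second part up to multiplicity $\ge r$ (this is exactly why $\varphi$ is applied to $\nu$ first, so that every multiplicity other than that of $u$ stays $\le r-1$), and, going backwards, that the split $a=(a\bmod r)+r\lfloor a/r\rfloor$ uniquely reconstitutes both the residual multiplicity of $u$ inside $\varphi(\nu)$ and a value $k\ge1$ (which holds because $a\ge r$). Once these consistency checks are in place, size-preservation is immediate from that of $\varphi$, and the two maps invert each other.
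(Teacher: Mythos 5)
Your proposal is correct and takes essentially the same approach as the paper in both halves: your analytic argument is the fixed-$u$ slice of the paper's computation (the paper tracks the multiplicity $u$ with an extra variable $z$, but the manipulation via Glaisher's product identity $\prod_{r\nmid m}(1-q^m)^{-1}=\prod_{m\ge1}(1-q^{rm})/(1-q^m)$ is the same), and your bijection $\Phi$ — replacing the rectangle $((rk)^u)$ by $(u^{rk})$ and applying Glaisher's bijection $\varphi$ to the $r$-regular remainder, with the inverse recovered by dividing the multiplicity of $u$ by $r$ — is exactly the paper's map $\psi$.
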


\smallskip
\noindent
In 2017, Beck \cite{Beck} conjectured  and Andrews \cite{A17} proved a companion identity to Euler's identity relating the number of parts in all partitions involved in Euler's identity. 
Denote by $a(n)$ the number of parts in all partitions in $\mathcal{O}(n;2,0)$, i.e., partitions of $n$ into odd parts,  and by $b(n)$ the number of parts in all partitions in $\mathcal{D}(n;2,0)$, i.e., partitions of $n$ into distinct parts. 
\begin{theorem}[Andrews] If $n$ is a non-negative integer, we have $$a(n)-b(n)=|\mathcal{O}(n;2,1)|=|\mathcal{D}(n;2,1)|.$$
\end{theorem}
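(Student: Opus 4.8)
The plan is to prove the Andrews/Beck theorem using generating functions, which is the cleanest route and mirrors the analytic proofs the paper promises. The key identity to establish is that $\sum_{n\geq 0}(a(n)-b(n))q^n$ equals the generating function for $|\mathcal{O}(n;2,1)|$ (equivalently $|\mathcal{D}(n;2,1)|$, these being equal by Franklin's identity with $r=2$, $j=1$). I would first record the two generating functions for the total number of parts. For partitions into odd parts, differentiating the standard two-variable generating function $\prod_{k\geq 1}\frac{1}{1-zq^{2k-1}}$ with respect to $z$ and setting $z=1$ tracks parts; concretely,
\begin{equation*}
\sum_{n\geq 0}a(n)q^n=\left(\prod_{k\geq 1}\frac{1}{1-q^{2k-1}}\right)\sum_{k\geq 1}\frac{q^{2k-1}}{1-q^{2k-1}}.
\end{equation*}
Similarly, for partitions into distinct parts, from $\prod_{k\geq 1}(1+zq^k)$ one obtains
\begin{equation*}
\sum_{n\geq 0}b(n)q^n=\left(\prod_{k\geq 1}(1+q^k)\right)\sum_{k\geq 1}\frac{q^k}{1+q^k}.
\end{equation*}

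The next step is to exploit Euler's identity at the level of the product: $\prod_{k\geq 1}\frac{1}{1-q^{2k-1}}=\prod_{k\geq 1}(1+q^k)$. Calling this common product $P(q)$, the difference becomes
\begin{equation*}
\sum_{n\geq 0}(a(n)-b(n))q^n=P(q)\left(\sum_{k\geq 1}\frac{q^{2k-1}}{1-q^{2k-1}}-\sum_{k\geq 1}\frac{q^k}{1+q^k}\right).
\end{equation*}
I would then simplify the bracketed factor. Writing $\frac{q^{2k-1}}{1-q^{2k-1}}=\sum_{m\geq 1}q^{m(2k-1)}$ and $\frac{q^k}{1+q^k}=\sum_{m\geq 1}(-1)^{m-1}q^{mk}$, both sums are Lambert-type series over $q^j$ weighted by the number of relevant divisors of $j$. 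The first counts, for each $j$, the number of odd divisors of $j$; the second counts the excess of odd-multiplicity over even-multiplicity divisor contributions, which by a standard manipulation also reduces to a count of odd divisors. The expected outcome is that the bracket collapses to $\sum_{j\geq 1}\frac{q^{2j}}{1-q^{2j}}$ or a similarly clean Lambert series counting even parts.

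The final step is to identify $P(q)\cdot(\text{bracket})$ with the generating function for $|\mathcal{O}(n;2,1)|$. A partition counted by $\mathcal{O}(n;2,1)$ consists of odd parts together with exactly one even part value $2j$ appearing with some positive multiplicity; its generating function is exactly $P(q)\sum_{j\geq 1}\frac{q^{2j}}{1-q^{2j}}$, where $P(q)$ supplies the odd-part core and the Lambert series inserts the unique repeated even part. Matching this against the simplified bracket completes the proof, and Theorem~\ref{T1} (or the classical Franklin identity) supplies the equality with $|\mathcal{D}(n;2,1)|$. The main obstacle I anticipate is the algebraic simplification of the bracketed difference of Lambert series: showing that the odd-part and distinct-part ``part-counting'' corrections differ by precisely the singleton-even-part generating function requires care in bookkeeping the alternating signs and divisor counts, and is where a sign error or an off-by-one in the multiplicity would most easily creep in.
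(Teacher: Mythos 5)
A preliminary remark: the paper never proves this statement — it is quoted as Andrews' theorem, with the proof residing in reference [A17] — so there is no internal proof to compare against; your generating-function route is, in spirit, the same analytic route Andrews took. The skeleton of your argument is sound: the $z$-derivative formulas $\sum_{n}a(n)q^n=P(q)\sum_{k\geq1}\frac{q^{2k-1}}{1-q^{2k-1}}$ and $\sum_{n}b(n)q^n=P(q)\sum_{k\geq1}\frac{q^k}{1+q^k}$ are correct, the use of Euler's product identity to factor out the common $P(q)$ is correct, and the final identification is also correct: $P(q)\sum_{j\geq1}\frac{q^{2j}}{1-q^{2j}}$ is exactly the generating function for $|\mathcal{O}(n;2,1)|$ (the product supplies the odd parts, the Lambert series the unique even part value with its positive multiplicity), and $|\mathcal{O}(n;2,1)|=|\mathcal{D}(n;2,1)|$ follows from Franklin's identity with $r=2$, $j=1$ (e.g., by summing Theorem \ref{T1} over $u\geq1$).

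The one genuine gap is precisely the step you flag but do not carry out: the collapse of the bracket. Moreover, the divisor-counting heuristic you give for it is wrong as stated. You say the second series \emph{also} reduces to a count of odd divisors; if both series had coefficient $d_{\mathrm{odd}}(n)$ on $q^n$, the bracket would vanish identically and you would conclude $a(n)=b(n)$, which is false. In fact the coefficient of $q^n$ in $\sum_{k\geq1}\frac{q^k}{1+q^k}=\sum_{k,m\geq1}(-1)^{m-1}q^{km}$ is $d_{\mathrm{odd}}(n)-d(n/2)$, where $d(n/2)$ denotes the number of divisors of $n/2$ (zero when $n$ is odd), so the bracket has coefficients $d(n/2)$ — which matches $\sum_{j\geq1}\frac{q^{2j}}{1-q^{2j}}$, as you predicted. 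The cleanest repair avoids divisor bookkeeping altogether: from $\frac{x}{1+x}=\frac{x}{1-x}-\frac{2x^2}{1-x^2}$ one gets $\sum_{k\geq1}\frac{q^k}{1+q^k}=\sum_{k\geq1}\frac{q^k}{1-q^k}-2\sum_{k\geq1}\frac{q^{2k}}{1-q^{2k}}$, while splitting into odd and even indices gives $\sum_{k\geq1}\frac{q^{2k-1}}{1-q^{2k-1}}=\sum_{k\geq1}\frac{q^k}{1-q^k}-\sum_{k\geq1}\frac{q^{2k}}{1-q^{2k}}$; subtracting, the bracket equals $\sum_{k\geq1}\frac{q^{2k}}{1-q^{2k}}$ exactly. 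With this two-line lemma inserted, your proof is complete and correct.
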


\smallskip
\noindent
A Beck-type companion identity to Glaisher's identity was provided by Yang \cite{Y18}. Ballantine and Welch \cite{BW} gave Beck-type companion identities to Franklin's identity.

\smallskip
\noindent
In \cite{S}, Straub discovered that Euler's identity holds for partitions of fixed perimeter rather than fixed size. 
In \cite{CL}, the \it perimeter \rm of a partition $\lambda$ is defined to be the maximum hook length in the Ferrers diagram of $\lambda$, i.e.,  $\lambda_1+\ell(\lambda)-1$. 
\begin{theorem}[Straub]\label{euler} The number of partitions into distinct parts with perimeter $M$ is equal to the number of partitions into odd parts with perimeter $M$. Both are enumerated by the Fibonacci number $F_M$.   \end{theorem}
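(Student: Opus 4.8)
The plan is to enumerate each of the two families directly and to observe that both counts collapse to the single binomial sum $\sum_{k\ge 0}\binom{M-1-k}{k}$, which is a classical expression for the Fibonacci number $F_M$ (for instance, it counts tilings of a strip of length $M-1$ by monominoes and dominoes, equivalently compositions of $M+1$ into parts at least $2$). Carrying out both enumerations \emph{term by term in the same index $k$} will prove simultaneously that each family has $F_M$ elements and that the two families are equinumerous: matching the $\binom{M-1-k}{k}$ distinct-part partitions with the $\binom{M-1-k}{k}$ odd-part partitions that share the parameter $k$ produces an explicit perimeter-preserving bijection, which is the form of the statement most useful for the later sections.

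For the distinct-parts count I would fix the number of parts $\ell=\ell(\lambda)$. Since the perimeter is $\lambda_1+\ell-1=M$, the largest part is forced to equal $\lambda_1=M-\ell+1$, and a partition into distinct parts with these data is exactly a choice of $\ell-1$ further parts from $\{1,2,\dots,M-\ell\}$. Equivalently, recording the gaps $\lambda_\ell,\ \lambda_{\ell-1}-\lambda_\ell,\ \dots,\ \lambda_1-\lambda_2$ and adding $1$ to each encodes a composition of $M+1$ into $\ell$ parts, each at least $2$. Either description yields $\binom{M-\ell}{\ell-1}$ such partitions, so with $k=\ell-1$ the distinct-parts total is $\sum_{k\ge 0}\binom{M-1-k}{k}=F_M$.

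For the odd-parts count I would pass to the $2$-modular diagram: writing each (necessarily odd) part as $\lambda_i=2a_i+1$ turns $\lambda$ into a weakly decreasing sequence $a_1\ge a_2\ge\cdots\ge a_\ell\ge 0$, and the perimeter becomes $M=\lambda_1+\ell-1=2a_1+\ell$. Fixing $k=a_1=(\lambda_1-1)/2$ forces $\ell=M-2k$, and the remaining data is a weakly decreasing sequence $a_2\ge\cdots\ge a_\ell\ge 0$ with all entries at most $k$, i.e. a partition inside a box with at most $M-2k-1$ parts each at most $k$, of which there are $\binom{M-1-k}{k}$. Hence the odd-parts total is again $\sum_{k\ge 0}\binom{M-1-k}{k}=F_M$, and pairing the two sides through the common index $k$ gives the bijection.

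The substantive point is to arrange the two parametrizations so that the \emph{same} index $k$ governs both families; this is where I expect the only real care to be needed, along with checking the degenerate case $k=0$, which on the distinct side is the single-row partition $(M)$ and on the odd side is the single-column partition $(1^M)$ (note these are conjugate, as every step here respects conjugation of the principal hook). Once the two enumerations are in place, identifying $\sum_{k\ge 0}\binom{M-1-k}{k}$ with $F_M$ is routine — it follows by induction from Pascal's rule, or directly from the monomino–domino tiling model — and this closes the proof.
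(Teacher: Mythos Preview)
Your argument is correct and complete for the enumerative statement. The paper itself does not give a self-contained proof of Straub's theorem; it cites the result and, in Section~2, records the Fu--Tang bit-string bijection $\psi:\mathcal H(M)\to\mathcal G(M)$, which acts on the profile word $b(\lambda)$ by the local rewriting rule ``$1\mapsto 1$ initially, $11\mapsto 10$, $01\mapsto 11$, final $0$ fixed.'' That is a genuinely different route from yours: you count each family directly and land on the common Fibonacci sum $\sum_k\binom{M-1-k}{k}$. It is worth noting that your two enumerations are precisely the $t=1$ specializations of the trivariate generating functions the paper sets up in Section~4 to compute $g(M)$ and $h(M)$ (where one instead differentiates in $t$); the paper never pauses to extract Straub's theorem this way, so your write-up makes that implicit identity explicit and elementary. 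What the bit-string bijection buys, and your approach does not quite deliver, is an explicit perimeter-preserving map on individual partitions: your ``match through the common index $k$'' proves equinumerosity block by block, but to get an honest bijection you would still need to name a canonical correspondence between the $k$-element subsets of $\{1,\dots,M-k-1\}$ on the distinct side and the lattice paths in the $(M-2k-1)\times k$ box on the odd side, which is routine but not stated.
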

\smallskip
\noindent
Motivated by Straub's theorem and the Beck-type companion identities toward generalizations of Euler's theorem, we study several partition numbers for certain partitions of fixed perimeter and for the number of parts in these sets of partitions. 

\smallskip
\noindent
First, let's fix some notations.  We denote $\mathcal{G}(M)$ (respectively $\mathcal{H}(M)$)  the set of all partitions into odd parts (respectively distinct parts)  with perimeter $M$. Let $g(M)$ be the total number parts in all partitions in $\mathcal{G}(M)$, and $h(M)$ be the total number of parts in all partitions in $\mathcal{H}(M)$. Moreover, we denote by  $\mathcal{G}_1(M)$ the set of partitions with perimeter $M$ in which all parts are odd except one even part (possibly repeated). We also denote by $\mathcal{H}_1(M)$ the set of partitions with perimeter $M$ where one part occurs at least twice and all other parts occur once. We let $g_1(M)=|\mathcal{G}_1(M)|$ and $h_1(M)=|\mathcal{H}_1(M)|$.

 \begin{example} We list the first few terms of each sequence starting with $M=1$. 
\begin{enumerate} \item  $g(M)$ starts with: $1, 2, 4, 8, 15, 28, 51, 92,\dots$.\smallskip 
\item  $h(M)$ starts with: $1, 1, 3, 5, 10, 18, 33, 59,\dots$.\smallskip
\item  $g_1(M)$ starts with: $0, 1, 2, 5, 10, 20, 38, 71,\dots$.\smallskip
 \item $h_1(M)$ starts with: $0, 1, 2, 5, 10, 20, 38, 71,\dots$.\smallskip
 \item $g(M)-h(M)$ starts with: $0, 1, 1, 3, 5, 10, 18, 33,\dots$.
\end{enumerate}
\end{example}

\smallskip
\noindent The generating functions for these sequences are given in the next theorem and its corollary. 

\newpage

\begin{theorem}  \label{T6} The generating functions for the sequences $g(M), h(M), g_1(M)$, and  $h_1(M)$ are 
\begin{align*} \sum_{M\geq1}g(M)\,x^M&=\frac{x-x^3}{(1-x-x^2)^2}, \qquad \sum_{M\geq1}h(M)\,x^M=\frac{x-x^2}{(1-x-x^2)^2}, \\
 \sum_{M\geq1}g_1(M)\,x^M&=\frac{x^2}{(1-x-x^2)^2}=\sum_{M\geq1}h_1(M)\,x^M.  \end{align*} \end{theorem}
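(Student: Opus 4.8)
The plan is to compute each of the four generating functions directly, building everything from a single bivariate generating function in each of the two families. First I would record the basic count: a partition with perimeter $M$ and exactly $\ell$ parts has largest part $\lambda_1=M-\ell+1$, so the remaining $\ell-1$ parts are chosen inside $\{1,\dots,\lambda_1-1\}$. For distinct parts this choice is an $(\ell-1)$-subset of $\{1,\dots,M-\ell\}$, giving $\binom{M-\ell}{\ell-1}$ partitions; for odd parts, writing $\lambda_1=2t+1$ (so $M-\ell=2t$), it is a size-$(\ell-1)$ multiset from the $t+1$ odd values $\{1,3,\dots,2t+1\}$, giving $\binom{\ell+t-1}{t}$ partitions. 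Summing these against $z^{\ell}x^{M}$ via the generalized binomial series yields the bivariate generating functions
\[
H(x,z)=\sum_{\mu\ \mathrm{distinct}} z^{\ell(\mu)}x^{\mathrm{perim}(\mu)}=\frac{zx}{1-x-zx^{2}},\qquad
G(x,z)=\sum_{\nu\ \mathrm{odd}} z^{\ell(\nu)}x^{\mathrm{perim}(\nu)}=\frac{zx}{1-zx-x^{2}},
\]
where $z$ tracks the number of parts. Setting $z=1$ recovers $\frac{x}{1-x-x^{2}}=\sum_{M}F_M x^M$ in both cases, which is Straub's Theorem~\ref{euler} and serves as a consistency check.

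Since $g(M)$ and $h(M)$ are \emph{total} part-counts, I would obtain their generating functions as $\partial_z\,G(x,z)\big|_{z=1}$ and $\partial_z\,H(x,z)\big|_{z=1}$. A one-line differentiation gives $\partial_z H=\frac{x-x^{2}}{(1-x-zx^{2})^{2}}$ and $\partial_z G=\frac{x-x^{3}}{(1-zx-x^{2})^{2}}$, and evaluating at $z=1$ produces exactly $\frac{x-x^{2}}{(1-x-x^{2})^{2}}$ and $\frac{x-x^{3}}{(1-x-x^{2})^{2}}$, the stated formulas for $h(M)$ and $g(M)$.

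For $h_1(M)$ I would avoid a fresh count and instead use a perimeter-preserving decomposition. Every $\lambda\in\mathcal H_1(M)$ arises from a unique partition into distinct parts $\mu$ by selecting one of its $\ell(\mu)$ parts and adjoining $r\ge1$ further copies of it; this leaves the largest part unchanged and raises the number of parts, hence the perimeter, by exactly $r$. Thus the generating function factors as
\[
\sum_{M\ge1} h_1(M)x^M=\Big(\sum_{r\ge1}x^{r}\Big)\sum_{\mu\ \mathrm{distinct}}\ell(\mu)\,x^{\mathrm{perim}(\mu)}=\frac{x}{1-x}\cdot\frac{x-x^{2}}{(1-x-x^{2})^{2}}=\frac{x^{2}}{(1-x-x^{2})^{2}},
\]
as claimed.

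Finally, for $g_1(M)$ I would decompose each $\lambda\in\mathcal G_1(M)$ into its (possibly empty) odd subpartition $\nu$ together with $s\ge1$ copies of a single even value $e=2f$. The perimeter equals $\max(2f,\nu_1)+\ell(\nu)+s-1$, and since $2f$ is even while $\nu_1$ is odd the two never tie, so I split into the case $2f>\nu_1$ (the even value is the largest part and the odd parts lie in $\{1,3,\dots,2f-1\}$) and the case $2f<\nu_1=2t+1$ (an odd part is largest, forcing $1\le f\le t$). Each case reduces to a geometric-type sum in $w=x^{2}/(1-x)$; summing them gives $\frac{x^{2}}{(1-x)(1-x-x^{2})}+\frac{x^{4}}{(1-x)(1-x-x^{2})^{2}}=\frac{x^{2}}{(1-x-x^{2})^{2}}$. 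I expect this last computation to be the main obstacle: unlike the other three it requires the parity-driven case split together with the weighted sum $\sum_{t\ge1} t\,w^{t}=w/(1-w)^{2}$ to absorb the choices of $f$, and one must track which part is largest with care. An attractive alternative would be to prove $g_1(M)=h_1(M)$ by a direct perimeter-preserving bijection between $\mathcal G_1(M)$ and $\mathcal H_1(M)$, a fixed-perimeter avatar of the $r=2$, $j=1$ Franklin correspondence, which would make the equality of the last two generating functions transparent.
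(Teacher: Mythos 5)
Your proposal is correct --- the binomial counts, both bivariate generating functions, the two differentiations, and the two-case computation for $g_1(M)$ all check out --- but it takes a genuinely different route from the paper in two places. For $g(M)$ and $h(M)$ the paper keeps the size of the partition as a third variable: it writes trivariate $q$-series, extracts the coefficient of $z^M$ as a sum of Gaussian binomials, sets $q=1$, and differentiates in the part-counting variable, producing the explicit formulas $g(M)=\sum_n (M-2n)\binom{M-n-1}{n}$ and $h(M)=\sum_j (j+1)\binom{M-j-1}{j}$ as stepping stones, which it then resums into the rational functions. You discard the size variable from the outset, which lets you sum the bivariate series in closed form ($\frac{zx}{1-x-zx^2}$ and $\frac{zx}{1-zx-x^2}$, both correct) and finish with one differentiation; this is shorter and more elementary, though it forgoes the binomial-sum formulas the paper explicitly advertises as a by-product of its proof, as well as the size refinement that the $q$-variable carries. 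The sharper divergence is at $h_1(M)$: the paper splits according to whether the largest part is repeated and works through two separate $q$-series, whereas your decomposition of each $\lambda\in\mathcal H_1(M)$ as a distinct-part partition $\mu$ together with a chosen part replicated $r\ge 1$ further times --- correctly noted to increase the perimeter by exactly $r$ --- reduces the whole computation to the already-known generating function of $h$, via $\frac{x}{1-x}\cdot\frac{x-x^2}{(1-x-x^2)^2}=\frac{x^2}{(1-x-x^2)^2}$. That argument is slicker than the paper's and has the added benefit of making the relation $h_1(M)=\sum_{r\ge1}h(M-r)$ combinatorially transparent. Your treatment of $g_1(M)$, by contrast, is structurally the same as the paper's (split by whether the unique even part is the largest part), just executed at $q=1$ with elementary multiset counts instead of $q$-series; your two partial sums $\frac{x^2}{(1-x)(1-x-x^2)}$ and $\frac{x^4}{(1-x)(1-x-x^2)^2}$ match the paper's after its simplification, so no gap there.
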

\smallskip
\noindent In the proof of this theorem, we also furnish formulas for these sequences.

 \begin{corollary}
\begin{align*}\sum_{M\geq1}(g_1(M)-g_1(M-1))\,x^M&=\frac{x^2-x^3}{(1-x-x^2)^2} =\sum_{M\geq1}(h_1(M)-h_1(M-1))\,x^M, \\
\sum_{M\geq1}(g(M)-h(M))\,x^M&=\frac{x^2-x^3}{(1-x-x^2)^2}.\end{align*}\end{corollary}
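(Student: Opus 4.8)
The plan is to deduce both lines directly from the closed forms obtained in Theorem \ref{T6}, so that no new combinatorial input is required. Throughout I would adopt the boundary convention $g_1(0)=h_1(0)=0$, which is forced by the fact that the series in Theorem \ref{T6} begin at $M=1$ and carry no constant term; this is the only bookkeeping detail that needs care.

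For the first line, I would recognize $g_1(M)-g_1(M-1)$ as the first-difference operator applied to the sequence $g_1$, which at the level of generating functions amounts to multiplication by $(1-x)$. Concretely, writing $G_1(x)=\sum_{M\geq 1} g_1(M)x^M$, I would split the sum and reindex the shifted piece:
\[
\sum_{M\geq 1}\bigl(g_1(M)-g_1(M-1)\bigr)x^M = G_1(x) - x\,G_1(x) = (1-x)\,G_1(x),
\]
where the reindexing $\sum_{M\geq1} g_1(M-1)x^M = x\sum_{k\geq 0} g_1(k)x^k = xG_1(x)$ uses $g_1(0)=0$. Substituting $G_1(x)=x^2/(1-x-x^2)^2$ from Theorem \ref{T6} then gives $(1-x)x^2/(1-x-x^2)^2 = (x^2-x^3)/(1-x-x^2)^2$. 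Since Theorem \ref{T6} asserts that $h_1$ has the identical generating function, the same computation verbatim produces the $h_1$ equality on the right of the first line.

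For the second line, I would simply subtract the two closed forms for $g$ and $h$ supplied by Theorem \ref{T6}:
\[
\sum_{M\geq1}\bigl(g(M)-h(M)\bigr)x^M = \frac{x-x^3}{(1-x-x^2)^2} - \frac{x-x^2}{(1-x-x^2)^2} = \frac{x^2-x^3}{(1-x-x^2)^2},
\]
since the numerators differ by $(x-x^3)-(x-x^2)=x^2-x^3$ while the denominators agree.

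I do not expect a genuine obstacle here: the content is entirely algebraic, and the three generating functions coincide by inspection once Theorem \ref{T6} is in hand. The one point worth flagging is the coherence of the $M=0$ convention—if one instead took $g_1(0)$ to be nonzero, the first-difference series would acquire a spurious $x^1$ term—but this value is pinned down by the series in Theorem \ref{T6}. The conceptual payoff, which I would state explicitly, is that equating the two displayed generating functions yields the sequence identity $g(M)-h(M)=g_1(M)-g_1(M-1)=h_1(M)-h_1(M-1)$, which is the Beck-type statement for fixed perimeter advertised in the abstract.
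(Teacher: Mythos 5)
Your proposal is correct and matches the paper's intended argument: the corollary is stated as an immediate consequence of Theorem \ref{T6}, obtained exactly as you do by multiplying the common generating function $x^2/(1-x-x^2)^2$ by $(1-x)$ for the first-difference line and by subtracting the closed forms for $g$ and $h$ for the second. Your attention to the convention $g_1(0)=h_1(0)=0$ is a sound bookkeeping point that the paper leaves implicit.
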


\smallskip
\noindent Next, we provide recurrence relations for $g(M), h(M), g_1(M)$, and  $h_1(M)$.

\begin{theorem} \label{T5} For all $M\geq 2$, we have
\begin{align}
\label{r1} g(M)&=g(M-1)+g(M-2)+F_{M-1}, \\
\label{r2} h(M)&=h(M-1)+h(M-2)+F_{M-2}, \\
\label{r3} g_1(M)&=g_1(M-1)+g_1(M-2)+F_{M-1}, \\
\label{r4} h_1(M)&=h_1(M-1)+h_1(M-2)+F_{M-1}.
\end{align}
 \end{theorem}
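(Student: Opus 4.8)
The plan is to read off all four recurrences from the generating functions of Theorem~\ref{T6}, using the classical identity $\sum_{M\ge1}F_Mx^M=x/(1-x-x^2)$ with the convention $F_0=0$, $F_1=F_2=1$. The point is that a recurrence of the form $a(M)=a(M-1)+a(M-2)+c(M)$ valid for $M\ge2$ is equivalent, after multiplying by $x^M$ and summing, to the statement that $(1-x-x^2)\sum_{M\ge1}a(M)x^M$ equals $\sum_{M\ge1}c(M)x^M$ up to a correction polynomial of degree at most $1$ encoding $a(0)$ and $a(1)$. So for each sequence I would multiply its generating function by the factor $1-x-x^2$, which cancels one power of the denominator, and compare the result with the appropriate shift of $x/(1-x-x^2)$.

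Carrying this out for $g$, with $G(x)=(x-x^3)/(1-x-x^2)^2$, gives $(1-x-x^2)G(x)=(x-x^3)/(1-x-x^2)=x+x^2/(1-x-x^2)$; since $x^2/(1-x-x^2)=\sum_{M\ge2}F_{M-1}x^M$, comparing coefficients of $x^M$ for $M\ge2$ yields \eqref{r1}, the stray term $x$ recording $g(1)=1$. The identical manipulation for $h$, with $H(x)=(x-x^2)/(1-x-x^2)^2$, produces $(1-x-x^2)H(x)=x+x^3/(1-x-x^2)$ and $x^3/(1-x-x^2)=\sum_{M\ge2}F_{M-2}x^M$, which is \eqref{r2}. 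Finally, for $g_1$ and $h_1$ the common generating function $x^2/(1-x-x^2)^2$ gives $(1-x-x^2)\sum_{M\ge1}g_1(M)x^M=x^2/(1-x-x^2)=\sum_{M\ge2}F_{M-1}x^M$ with no correction term at all, so \eqref{r3} and \eqref{r4} follow simultaneously. Matching the base case $M=2$ against the initial values listed above confirms the Fibonacci indexing. The entire argument is routine; the only care needed is the bookkeeping of the low-degree correction polynomials, which is why I verify the small cases by hand.

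A more conceptual, combinatorial proof would explain the Fibonacci corrections directly. Every partition of perimeter $M$ is recorded by its lower-right boundary path, a word $D\,w\,L$ in the two letters $D$ (down) and $L$ (left) whose interior $w$ has length $M-1$; under this encoding ``distinct parts'' becomes ``$w$ avoids two consecutive $D$'s,'' and the odd-part, one-even-part, and one-repeated-part conditions likewise become local conditions on $w$. A case analysis on the first letter of $w$ then produces the homogeneous part $a(M-1)+a(M-2)$ of each recurrence, and the remaining partitions---those on which the step-deletion is not invertible---form a shifted copy of $\mathcal{G}$ or $\mathcal{H}$, hence are counted by $F_{M-1}$ or $F_{M-2}$ via Straub's Theorem~\ref{euler}. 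I expect the main obstacle to lie here: identifying the non-invertible family with a clean shifted copy of the Fibonacci-enumerated sets, and, for \eqref{r1} and \eqref{r2}, handling the fact that $g$ and $h$ weight each partition by its number of parts rather than counting partitions, so that one must track pointed partitions rather than partitions throughout.
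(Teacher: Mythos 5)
Your generating-function argument is correct and complete, and it is essentially the route the paper itself takes in the remark following the proof of Theorem \ref{T6}: there the authors observe that all four sequences, together with the relevant Fibonacci series, have generating functions with the common denominator $(1-x-x^2)^2$, hence satisfy one and the same fourth-order linear recurrence, so that the four identities follow after checking them for $M\le 4$. Your execution is marginally more direct --- you cancel one factor of $1-x-x^2$ and identify the quotient as a shifted copy of $x/(1-x-x^2)$ plus a correction of degree at most one --- which gives the recurrences coefficient by coefficient without appealing to a common annihilator; the substance, however, is the same, and both versions rest on Theorem \ref{T6}. That reliance is legitimate here, since the paper proves Theorem \ref{T6} analytically without using Theorem \ref{T5} (note, though, that the paper also contains a remark deriving Theorem \ref{T6} from Theorem \ref{T5}, so your proof cannot be combined with that remark without circularity). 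What you only sketch is what the paper presents as its dedicated proof of Theorem \ref{T5}: a bit-string case analysis on the second (and third) digit of $b(\lambda)$, where deleting one or two digits gives bijections onto the perimeter-$(M-1)$ and perimeter-$(M-2)$ sets, and the Fibonacci terms arise either from the change in the number of parts under these bijections (for $g$ and $h$ --- precisely the pointed-partition issue you flag) or, for $g_1$ and $h_1$, from deleting a maximal block of $0$'s and the telescoping identity $F_{M-3}+\cdots+F_1+1=F_{M-1}$. So your anticipated obstacles in the combinatorial approach are real but surmountable; as written, only your analytic argument constitutes a complete proof.
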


\smallskip
\noindent The next corollary follows immediately from Theorem \ref{T5}. It can also be proven by making use of the generating functions in Theorem \ref{T6}. We adopt the convention that  $F_{-1}=1, F_0=0, F_1=1$.

\begin{corollary} \label{Cfib} For $M\geq 2$, we have
\begin{enumerate}
\item $g(M)=F_M+\sum_{k=1}^{M-1}F_kF_{M-k}$; \medskip 
\item $h(M)=F_M+\sum_{k=1}^{M-2}F_kF_{M-1-k}$;\medskip
\item $g_1(M)=h_1(M)=\sum_{k=1}^{M-1}F_kF_{M-k}$;\medskip
\item $g(M)-h(M)= g_1(M)-g_1(M-1)$  

\hspace{1.8cm}$ = h_1(M)-h_1(M-1)=F_{M-1}+\sum_{k=1}^{M-3}F_kF_{M-2-k}.$ \end{enumerate}\end{corollary}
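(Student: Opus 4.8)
The plan is to derive Corollary \ref{Cfib} directly from the recurrences in Theorem \ref{T5}, which I may assume. Parts (1) and (2) each assert a closed form for a sequence satisfying a Fibonacci-shifted recurrence of the form $f(M)=f(M-1)+f(M-2)+c_M$ where $c_M$ is a Fibonacci number, so the natural approach is to verify that the proposed closed forms satisfy both the recurrence and the initial conditions, and then invoke uniqueness of solutions to a second-order linear recurrence with prescribed initial data. I would set up the convolution sums $S(M):=\sum_{k=1}^{M-1}F_kF_{M-k}$ and prove the single key identity that $S$ itself satisfies
\begin{equation*}
S(M)=S(M-1)+S(M-2)+F_{M-1},
\end{equation*}
which is exactly the recurrence \eqref{r3}--\eqref{r4} for $g_1$ and $h_1$. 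This would immediately give part (3): since $g_1$ and $h_1$ both satisfy \eqref{r3} and \eqref{r4} (the same recurrence) and $S$ satisfies it too, matching the base cases $g_1(1)=h_1(1)=0=S(1)$ forces $g_1(M)=h_1(M)=S(M)$ for all $M$.

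The main computational step, then, is establishing the convolution recurrence for $S(M)$. I would expand
\begin{align*}
S(M) &= \sum_{k=1}^{M-1}F_kF_{M-k} = \sum_{k=1}^{M-1}F_k\bigl(F_{M-1-k}+F_{M-2-k}\bigr)
\end{align*}
using $F_{M-k}=F_{M-1-k}+F_{M-2-k}$, then reindex each of the two resulting sums to recognize $S(M-1)$ and $S(M-2)$, carefully tracking the boundary terms (the $k=M-1$ term where $F_{M-1-k}=F_0=0$, and the terms producing $F_{M-1}$ via the convention $F_{-1}=1$). For parts (1) and (2), once part (3) is in hand, I can either verify the closed forms $g(M)=F_M+S(M)$ and $h(M)=F_M+\sum_{k=1}^{M-2}F_kF_{M-1-k}=F_M+S(M-1)$ satisfy \eqref{r1} and \eqref{r2} respectively using the Fibonacci recurrence $F_M=F_{M-1}+F_{M-2}$ together with the $S$-recurrence, or observe directly that subtracting the closed-form candidates from the recurrence reduces to the identity for $S$.

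Part (4) is then essentially bookkeeping: the equalities $g(M)-h(M)=g_1(M)-g_1(M-1)=h_1(M)-h_1(M-1)$ follow by subtracting the recurrences \eqref{r1} and \eqref{r2} to get $g(M)-h(M)=(g(M-1)-h(M-1))+(g(M-2)-h(M-2))+(F_{M-1}-F_{M-2})$, comparing against \eqref{r3} in differenced form, and matching base cases; alternatively they are immediate from part (3) and the closed forms, since $g(M)-h(M)=S(M)-S(M-1)$ and $g_1(M)-g_1(M-1)=S(M)-S(M-1)$. The final closed form $F_{M-1}+\sum_{k=1}^{M-3}F_kF_{M-2-k}=F_{M-1}+S(M-2)$ then follows from $S(M)-S(M-1)=S(M-2)+F_{M-1}$, which is just the $S$-recurrence rearranged. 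I expect the only genuine obstacle to be handling the edge cases in the reindexing of the convolution sum cleanly, particularly reconciling the boundary contributions with the stated convention $F_{-1}=1$; the rest is routine induction bookkeeping once that identity is secured.
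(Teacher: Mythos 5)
Your proposal is correct and takes essentially the same route as the paper: the paper's proof of Corollary \ref{Cfib} is precisely that it ``follows immediately from Theorem \ref{T5}'' (with a generating-function alternative via Theorem \ref{T6}), and your argument simply makes that immediacy explicit by verifying that the closed forms satisfy the recurrences \eqref{r1}--\eqref{r4} and the initial conditions. Your key convolution identity $S(M)=S(M-1)+S(M-2)+F_{M-1}$ for $S(M)=\sum_{k=1}^{M-1}F_kF_{M-k}$ is valid under the paper's convention $F_{-1}=1$, and the rest of the bookkeeping (including part (4) as $S(M)-S(M-1)=S(M-2)+F_{M-1}$) checks out; just be sure to match \emph{two} initial values, not only $M=1$, when invoking uniqueness for the second-order recurrence.
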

 
\smallskip
\noindent Part (4) of Corollary \ref{Cfib} leads to a Beck-type companion identity to  Straub's  theorem.

 \begin{corollary} \label{g-h} $g(M)-h(M)$ equals the number of partitions in $\mathcal G_1(M)$ with no  part equal to $1$. 
 \end{corollary}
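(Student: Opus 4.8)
The plan is to combine the arithmetic already available from Corollary \ref{Cfib} with a single combinatorial bijection. By part (4) of Corollary \ref{Cfib} we have $g(M)-h(M)=g_1(M)-g_1(M-1)$, so it suffices to show that the number of partitions in $\mathcal G_1(M)$ having no part equal to $1$ equals $g_1(M)-g_1(M-1)$. Since $g_1(M)=|\mathcal G_1(M)|$, this is in turn equivalent to showing that the number of partitions in $\mathcal G_1(M)$ that \emph{do} contain a part equal to $1$ is exactly $g_1(M-1)=|\mathcal G_1(M-1)|$. I would therefore construct an explicit bijection between $\mathcal G_1(M-1)$ and the subset of $\mathcal G_1(M)$ consisting of those partitions with at least one part equal to $1$.

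The map I have in mind is the natural one: from a partition in $\mathcal G_1(M)$ that has a part equal to $1$, delete a single copy of the part $1$; conversely, to a partition in $\mathcal G_1(M-1)$, append a new part equal to $1$. Because $1$ is odd, deleting or inserting it does not disturb the unique even value, so the image still has all parts odd except one (possibly repeated) even part. The observation that makes the perimeter bookkeeping work is that every partition in $\mathcal G_1$ contains an even part and hence has largest part at least $2$; consequently the part $1$ is never the largest part, so removing it leaves $\lambda_1$ unchanged while lowering the length $\ell(\lambda)$ by one, and the perimeter $\lambda_1+\ell(\lambda)-1$ decreases by exactly $1$ (and increases by exactly $1$ under insertion).

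The step requiring the most care is verifying that both maps land in the intended sets and are mutually inverse. For the forward map I must confirm that after deleting a $1$ the remaining partition is nonempty (guaranteed by the surviving even part) and still lies in $\mathcal G_1(M-1)$; for the backward map I must confirm that the resulting partition genuinely belongs to $\mathcal G_1(M)$ and acquires a part equal to $1$. Once deletion and insertion are recognized as inverse operations on these two families, the counts agree, giving
\begin{equation*}
\bigl|\{\lambda\in\mathcal G_1(M): 1\text{ is a part of }\lambda\}\bigr|=g_1(M-1).
\end{equation*}
Hence the number of partitions in $\mathcal G_1(M)$ with no part equal to $1$ equals $g_1(M)-g_1(M-1)=g(M)-h(M)$, as claimed. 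As a sanity check against the Example, for $M=4$ one has $\mathcal G_1(4)=\{(4),(3,2),(2,2,2),(2,2,1),(2,1,1)\}$; exactly the first three avoid the part $1$, matching $g(4)-h(4)=3$.
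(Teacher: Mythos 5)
Your proof is correct and follows essentially the same route as the paper: the paper derives this corollary directly from part (4) of Corollary \ref{Cfib}, i.e., from $g(M)-h(M)=g_1(M)-g_1(M-1)$, and your deletion/insertion bijection (valid precisely because every partition in $\mathcal G_1(M)$ has an even part, so a part $1$ is never the largest and removing it drops the perimeter by exactly one) is the natural argument the paper leaves implicit for identifying $g_1(M-1)$ with the number of partitions in $\mathcal G_1(M)$ containing a part equal to $1$.
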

 
 \begin{remark} The identity $g_1(M)=h_1(M)$ in part (3) of Corollary \ref{Cfib}  is Franklin's identity for $j=1$, $r=2$ applied to partitions of fixed perimeter. 
 \end{remark}

\newpage

 \section{Partitions with fixed perimeter} 
 
\noindent The Ferrers diagram of a partition $\lambda=(\lambda_1, \lambda_2, \ldots, \lambda_{\ell})$ is an array of left-justified boxes such that the $i$-th row from the top contains $\lambda_i$ boxes. For example, the Ferrers diagram of the partition $\lambda= (6,6, 3, 2, 2,1)$ is shown below. $$\tiny\ydiagram{6,6, 3, 2, 2,1}$$ We can also describe a partition $\lambda$ by a binary string or bit string  $b(\lambda)$ starting with $1$ and ending with $0$ as follows. Start at SW corner of the Ferrers diagram  and travel along the outer profile going NE. The profile is a Dyck path. For each step to the right, record a $1$; for each step up, record a $0$. For example, the bit string representing the partition $\lambda= (6,6, 3, 2, 2,1)$ is $b(\lambda)=101001011100$.
The perimeter of the partition $\lambda$ is one less than the number of digits in $b(\lambda)$. The number of parts of $\lambda$ equals the number of $0$ digits in $b(\lambda)$. This interpretation, with $N$ and $E$ in place of $0$ and $1$ respectively, is used by Fu and Tang in \cite{FT} to prove theorems for partitions with fixed perimeter.

 \smallskip 
\noindent
 Let $t_n(M)$ be the number of partitions of $n$ with perimeter $M$.  Since partitions having perimeter $M$ are in bijection with bit strings of length $M+1$ where the first digit is $1$ and last digit is $0$ and  there are $2^{M-1}$ such bit strings, there are $2^{M-1}$ partitions with perimeter $M$.

\begin{example}
\begin{align*} t_5(5)=& |\{(5), (4,1), (3,1,1), (2,1,1,1), (1,1,1,1,1)\}|=5,\\ t_6(5)=& |\{(4,2), (3,2,1), (2,2,1,1)\}|=3,\\  t_7(5)= & |\{(4,3), (3,3,1), (3,2,2), (2,2,2,1)\}|=4,\\ t_8(5)=& |\{(4,4), (3,3,2), (2,2,2,2)\}|=3,\\  t_9(5)=& |\{(3,3,3)\}|=1\end{align*} and $t_n(5)=0$ if $n\geq 10$. Thus, there are $16=2^4$ partitions with perimeter $5$.\end{example}

\smallskip 
\noindent The generating function for $t_n(M)$ is given in the next theorem. It likely appears in the literature but we were unable to find a mention of it.

\begin{theorem} We have
$$\sum_{n,M\geq0}t_n(M)\,z^Mq^n=\sum_{M\geq0}z^Mq^M\sum_{j=0}^{M-1}\begin{bmatrix} M-1\\j\end{bmatrix}_q.$$ \end{theorem}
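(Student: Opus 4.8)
The plan is to fix the largest part and the number of parts of a partition with perimeter $M$, reduce the enumeration to counting partitions contained in a rectangle, and then invoke the classical generating function for boxed partitions, which is precisely a Gaussian binomial coefficient. First I would record that a partition $\lambda$ with perimeter $M$ satisfies $\lambda_1 + \ell(\lambda) - 1 = M$, and fix the largest part $\lambda_1 = a$ and the length $\ell(\lambda) = b$, so that $a + b = M+1$ with $a,b \geq 1$. The largest hook, consisting of the first row together with the first column, then contains exactly $a + b - 1 = M$ cells.

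The key step is the size-respecting bijection obtained by deleting this hook. Removing the first row and first column sends $\lambda$ to the partition $\mu = (\lambda_2 - 1, \lambda_3 - 1, \ldots, \lambda_b - 1)$ (discarding any zero parts). Since $\mu_1 = \lambda_2 - 1 \leq a-1$ and $\mu$ has at most $b-1$ parts, $\mu$ is contained in the rectangle with at most $b-1$ rows and at most $a-1$ columns; conversely, every such $\mu$ arises from a unique $\lambda$ of the prescribed shape (prepend a column of height $b$ and fill the top row to length $a$). Because exactly $M$ cells are deleted, this bijection satisfies $|\lambda| = |\mu| + M$.

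I would then apply the well-known fact that $\sum_{\mu} q^{|\mu|}$, taken over partitions $\mu$ contained in a $c \times d$ rectangle, equals the Gaussian binomial $\begin{bmatrix} c+d \\ c \end{bmatrix}_q$. With $c = b-1$ and $d = a-1$, so that $c+d = M-1$, the contribution of fixed $(a,b)$ to $\sum_n t_n(M)\,q^n$ is $q^M \begin{bmatrix} M-1 \\ a-1 \end{bmatrix}_q$. Summing over $a = 1, \ldots, M$ (equivalently over $b = M+1-a \geq 1$) and substituting $j = a-1$ yields
$$\sum_{n\geq 0} t_n(M)\,q^n = q^M \sum_{j=0}^{M-1} \begin{bmatrix} M-1 \\ j \end{bmatrix}_q,$$
and multiplying by $z^M$ and summing over $M \geq 0$ gives the asserted double generating function (the $M=0$ term being an empty inner sum, consistent with counting no partition of perimeter $0$).

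I do not expect a genuine obstacle here; the only care required is in the bookkeeping of the rectangle's dimensions and in confirming the $M=0$ boundary term. As a sanity check one can verify the case $M=5$, where $q^5 \sum_{j=0}^{4} \begin{bmatrix} 4 \\ j \end{bmatrix}_q = 5q^5 + 3q^6 + 4q^7 + 3q^8 + q^9$, matching the values $t_5(5)=5,\ t_6(5)=3,\ t_7(5)=4,\ t_8(5)=3,\ t_9(5)=1$ from the preceding example. An alternative route would track the same computation through the bit-string model, where deleting the initial $1$ and final $0$ leaves an arbitrary binary word of length $M-1$ and the size statistic becomes the word's inversion count; I would still favor the hook-removal argument since it makes the identity $|\lambda| = |\mu| + M$ transparent.
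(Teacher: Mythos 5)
Your proposal is correct and takes essentially the same approach as the paper: the paper's proof adds an outer hook of arm-length $M-j$ and leg-length $j$ to a partition in a box (citing $\begin{bmatrix} M-1\\j\end{bmatrix}_q=\sum_n p(M-j-1,j,n)q^n$), which is exactly your hook-deletion bijection run in reverse, with your indexing by largest part matching the paper's indexing by leg length via the symmetry $\begin{bmatrix} M-1\\j\end{bmatrix}_q=\begin{bmatrix} M-1\\ M-1-j\end{bmatrix}_q$. Your write-up just makes the inverse direction and the size bookkeeping $|\lambda|=|\mu|+M$ more explicit.
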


\begin{proof} If $p(N,j,n)$ denotes the number of partitions of $n$ into at most $j$ parts, each less than or equal to $N$, then from \cite{A98} we have $$\begin{bmatrix} M-1\\j\end{bmatrix}_q=\sum_{n=0}^\infty p(M-j-1,j,n)q^n.$$ 

\noindent
Given $\lambda=(\lambda_1, \lambda_2, \ldots, \lambda_\ell)$, a partition of $n$ with  $\ell\leq j$ parts, each less than or equal to $M-j-1$, let   $\mu$ be  the partition obtained from $\lambda$ by adding an outer  hook of arm-length $M-j$ and leg-length $j$, i.e., $\mu=(M-j, \lambda_1+1, \lambda_2+1, \ldots \lambda_\ell+1, 1, \ldots, 1$, where the multiplicity of $1$ is $j-\ell$ (and it could be $0$). Then $\mu$ has perimeter $M$ and size $n+M$. 
\end{proof}

\noindent 
We note that Staub's bijection in his proof  of Theorem \ref{euler} is defined recursively. Fu and Tang \cite{FT} give a new interpretation of Straub's  bijection that is no longer recursive.  We describe their bijection $\psi: \mathcal{H}(M)\to \mathcal{G}(M)$ in the language of bit strings.
Note that the bit string of a partition with distinct parts has no repeated $0$'s. The bit string for a partition with odd parts has the property that every initial substring ending in $0$ contains an odd number of digits equal to $1$. So,  given $\lambda \in \mathcal{H}(M)$, the bit string $b(\lambda)$   is  transformed (working from left to right) as follows: 

\smallskip
The first $1$  remains unchanged. \smallskip 

A digit $1$ that is preceded by a $1$ changes to $0$. \smallskip

A substring $01$ changes to $11$. \smallskip

The last $0$ remains unchanged. \smallskip

\noindent The resulting bit string  $b(\mu)$ defines  a partition $\mu\in \mathcal{G}(M)$. Define $\psi(\lambda)=\mu$.

\section{Proof of Theorem \ref{T1}}

\begin{proof}[Analytic proof] We have
\begin{align*} \sum_{n,u\geq0}^{\infty}\alpha_u^{(r)}(n)z^uq^n&=\sum_{j=1}^{\infty}\frac{zq^{rj}}{1-zq^{rj}}\,\prod_{k=1}^{\infty}\frac{1-q^{rk}}{1-q^k} \\
&=\sum_{j,h\geq1}(zq^{rj})^h\,\prod_{k=1}^{\infty}\frac{1-q^{rk}}{1-q^k} \\
&=\frac1{(q;q)_{\infty}}\sum_{h\geq1}z^hq^{rh}\prod_{\substack{k=1 \\ k\neq h}}(1-q^{rk}) \\
&=\sum_{h\geq1}\frac{z^hq^{rh}}{1-q^h}\prod_{\substack{k=1 \\ k\neq h}}(1+q^r+q^{2r}+\cdots+q^{(k-1)r}) \\
&= \sum_{n,u\geq0}^{\infty}\beta_u^{(r)}(n)z^uq^n. 
\end{align*}\end{proof}

\begin{proof}[Combinatorial proof] We employ the exponential notation and write $(a^m)$ for the partition with $m$ parts all equal to $a$. Given two partitions $\eta$ and $\xi$, the partition $\eta\cup \xi$ is the partition obtained by rearranging the parts of $\eta$ and $\xi$, respecting multiplicities.
We construct a bijection $$\psi: \mathcal{O}_u(n;r,1)\to \mathcal{D}_u(n;r,1).$$ Let $\lambda\in \mathcal{O}_u(n;r,1)$ and suppose $d=rj$, $j\geq 1$, is the part divisible by $r$. Write $\lambda=\mu\cup ((rj)^u)$, where $\mu\in \mathcal{O}(n;r,0)$. We define $\psi(\lambda)=\varphi(\mu)\cup (u^{rj})\in \mathcal{D}_u(n;r,1)$ where $\varphi$ is Glasher's bijection. Note that $\varphi(\mu)\in \mathcal{D}(n;r,0)$ could have parts equal to $u$. These occur less than $r$ times. 

\smallskip
\noindent
To see that $\psi$ is invertible, let $\eta\in \mathcal{D}_u(n;r,1)$ and suppose that part $u$ occurs $m_u\geq r$ times in $\eta$. Express $m_u=qr+s$, for unique  $q\geq 1$,  $0\leq s<r$, and write $\eta=\xi\cup  (u^{rs})$. We have $\xi\in  \mathcal{D}(n;r,0)$. Then $\psi^{-1}(\eta)=\varphi^{-1}(\xi)\cup ((rs)^u)\in \mathcal{O}_u(n;r,1)$ with $\varphi^{-1}(\xi)\in \mathcal{O}(n;r,0)$.
\end{proof}

\section{An Analytic proof of Theorem \ref{T6}}

\noindent In this section, we prove Theorem \ref{T6} and show how it leads to an analytic proof of Theorem \ref{T5}.

\begin{proof} [A formula for $g(M)$]  The generating function for partitions with odd parts where the power of $q$ is the number partitioned, the power of $z$ is the perimeter and the power of $t$ is the number of parts, takes the form
$$\sum_{n\geq0}\frac{tz^{2n+1}q^{2n+1}}{(tzq;q^2)_{n+1}}=\sum_{n\geq0}tz^{2n+1}q^{2n+1}\sum_{s\geq0}\begin{bmatrix}n+s\\n\end{bmatrix}_{q^2}(tzq)^s.$$
The coefficient of $z^M$ in the above sum is
$$\sum_{n\geq0}q^Mt^{M-2n}\begin{bmatrix}{M-n-1}\\n\end{bmatrix}_{q^2}.$$
To obtain a formula for  $g(M)$, we must set $q=1$, differentiate with respect to $t$ and then set $t=1$. The outcome is
\begin{equation}\label{A} g(M)=\sum_{0\leq 2n\leq M-1}(M-2n)\binom{M-n-1}n. \end{equation}
Formula \eqref{A} allows an easy computation of the generating function for $g(M)$. Namely,
\begin{align*} \sum_{M\geq0}g(M)\,z^M&=\sum_{M\geq0}z^M\sum_{0\leq 2n\leq M-1}(M-2n)\binom{M-n-1}n \\
&=\sum_{M,n\geq0}z^{M+2n+1}(M+1)\binom{M+n}n\\& =\sum_{M\geq0}{z^{M+1}(M+1)}{(1-z^2)^{-(M+1)}} 
=\sum_{M\geq1}{z^MM}{(1-z^2)^{-M}} \\
&=\frac{\frac{z}{1-z^2}}{\left(1-\frac{z}{1-z^2}\right)^2}
=\frac{z-z^3}{(1-z-z^2)^2}. 
\end{align*} \end{proof}

\begin{proof} [A formula for $h(M)$]  We proceed exactly as was done for $g(M)$. The three variable generating function is
$$\sum_{n\geq0}(-tzq;q)_nz^{n+1}q^{n+1}t=\sum_{n=0}^{\infty}\sum_{j=0}^n\begin{bmatrix}{n}\\j\end{bmatrix}_q q^{\binom{j+1}{2}}t^jz^jz^{n+1}q^{n+1}t.$$
The coefficient of $z^M$ in the sum above is
$$\sum_{0\leq 2j\leq M-1}t^{j+1}q^{M+\binom{j}2}\begin{bmatrix}{M-j-1}\\j\end{bmatrix}_{q}.$$
To obtain $h(M)$, we must set $q=1$, differentiate with respect to $t$ and then set $t=1$. Hence
\begin{equation}\label{B} h(M)=\sum_{0\leq 2j\leq M-1}(j+1)\binom{M-j-1}j. \end{equation}
Formula \eqref{B} facilitates an easy computation of the generating function for $h(M)$. That is,
\begin{align*} \sum_{M\geq0}h(M)\,z^M&=\sum_{M\geq0}z^M\sum_{0\leq 2j\leq M-1}(j+1)\binom{M-j-1}j \\
&=\sum_{M,j\geq0}z^{M+2j+1}(j+1)\binom{M+j}j=\sum_{j\geq0}{z^{2j+1}(j+1)}{(1-z)^{-(j+1)}} \\
&=\sum_{j\geq1}{z^{2j-1}j}{(1-z)^{-j}}=z^{-1}\frac{\frac{z^2}{1-z}}{\left(1-\frac{z^2}{1-z}\right)^2}
=\frac{z-z^2}{(1-z-z^2)^2}.
\end{align*} \end{proof}

\begin{proof} [A formula for $g_1(M)$]  Recall that $g_1(M)$ is the number of perimeter $M$ partitions with odd parts and one exception, i.e., exactly one part is even and this part may be repeated. In the bivariate generating function below, the power of $q$ is the number partitioned and  the power of $z$ is the perimeter. We write the generating function as two sums. In the first, the largest part is even; in the second, the largest part is odd:
$$\sum_{k\geq1}\frac{z^{2k}q^{2k}}{(zq;q^2)_k(1-zq^{2k})}+\sum_{k\geq0}\frac{z^{2k+1}q^{2k+1}}{(zq;q^2)_{k+1}}\sum_{j=1}^k\frac{zq^{2j}}{1-zq^{2j}}.$$
Setting $q=1$, we obtain the generating function for $g_1(M)$:
\begin{align*} \sum_{M\geq0}g_1(M)\,z^M&=\sum_{k\geq1}\frac{z^{2k}}{(1-z)^{k+1}}+\sum_{k\geq0}\frac{z^{2k+2}k}{(1-z)^{k+2}} \\
&=\sum_{k\geq0}\left(\frac{z^{2k+2}}{(1-z)^{k+2}}+\frac{z^{2k+2}k}{(1-z)^{k+2}}\right) \\
&=\sum_{k\geq0}\sum_{j\geq0}z^{2k+2+j}\binom{k+1+j}j(k+1).
\end{align*}

\noindent
Therefore, we find that
$$g_1(M)=\sum_{k\geq0}(k+1)\binom{M-k-1}{k+1}.$$
We may also simplify the generating function for $g_1(M)$ so that 
\begin{align*} \sum_{M\geq0}g_1(M)\,q^M&=\sum_{M\geq1}\frac{z^{2M}}{(1-z)^{M+1}}+\sum_{M\geq0}\frac{z^{2M+2}M}{(1-z)^{M+2}} \\
&=\frac{z^2}{(1-z)(1-z-z^2)}+\frac{z^4}{(1-z)^3\left(1-\frac{z^2}{1-z}\right)^2} 
=\frac{z^2}{(1-z-z^2)^2}. 
\end{align*} \end{proof}

\begin{proof} [A formula for $h_1(M)$] Recall that $h_1(M)$ is the number of perimeter $M$ partitions with distinct parts and one exception, i.e., exactly one part is repeated. Again, when setting up the bivariate generating function, we must take into account whether or not (i) the largest part is repeated or (ii) the largest part is {\it not} repeated:
$$\sum_{M\geq1}\frac{z^{M+1}q^{2M}}{1-zq^M}\,(-zq;q)_{M-1}+\sum_{M\geq 2}z^Mq^M\sum_{j=1}^{M-1}\frac{(-zq;q)_{M-1}}{1+zq^j}\,\frac{z^2q^{2j}}{1-zq^j}.$$

\noindent
Therefore, setting $q=1$ in the sum above, we obtain

\begin{align*} \sum_{M\geq0}h_1(M)\,q^M&=\sum_{M\geq1}\frac{z^{M+1}(1+z)^{M-1}}{1-z}+\sum_{M\geq2}\sum_{j=1}^{M-1}\frac{z^{M+2}(1+z)^{M-2}}{1-z} \\
&=\sum_{M\geq1}\frac{z^{M+1}(1+z)^{M-1}}{1-z}+\sum_{M\geq2}\frac{(M-1)z^{M+2}(1+z)^{M-2}}{1-z} \\
&=\sum_{M\geq1}\frac{z^{M+1}(1+z)^{M-1}}{1-z}+\sum_{M\geq1}\frac{Mz^{M+3}(1+z)^{M-1}}{1-z} \\
&=\sum_{M\geq1}\frac{z^{M+1}(1+z)^{M-1}(1+z^2M)}{1-z} \\
&=\frac{z}{1-z^2}\sum_{M\geq1}(z(1+z))^M(1+z^2M) \\
&=\frac{z(z+z^2)}{(1-z^2)(1-z-z^2)}+\frac{z^3}{1-z^2}\cdot\frac{z+z^2}{(1-z-z^2)^2} \\
&=\frac{z^2}{(1-z-z^2)^2} \\
&=\sum_{M\geq0}g_1(M)\,z^M.
\end{align*}
Hence, we have found the generating function and proved that $h_1(M)=g_1(M)$. \end{proof}
\medskip

\noindent
\begin{remark}  The four assertions in Theorem \ref{T5} now follow for the following reason.
\smallskip
\noindent
Each of the sequences $g(M), h(M), g_1(M), h_1(M)$ satisfy the same fourth order recurrence relation because each generating function has identical denominator $(1-z-z^2)^2$. In addition,
\begin{align*} \sum_{M\geq0}F_M\,z^M&=\frac{z(1-z-z^2)}{(1-z-z^2)^2}, \\
\sum_{M\geq0}MF_M\,z^M& =\frac{z+z^3}{(1-z-z^2)^2}, \\
\sum_{M\geq0}\sum_{j=0}^MF_jF_{M-j}\,z^M&=\frac{z^2}{(1-z-z^2)^2}.
\end{align*}
Hence, all the sequences in the four claims of Theorem \ref{T5} fulfil the same fourth order recurrence. Consequently, the proof of the four claims only require checking that they are true for $M\leq 4$. 

\end{remark}

\section{A combinatorial proof of Theorem \ref{T5}}

\noindent
In the current section, we supply combinatorial arguments for the four recurrence relations listed in Theorem \ref{T5}.

\begin{proof}
\noindent We first prove the recurrence relation \eqref{r1} for $g(M)$: 
$$ g(M)=g(M-1)+g(M-2)+F_{M-1}.$$
Recall that if $\lambda\in \mathcal{G}(M)$, then every initial substring of $b(\lambda$ ending in $0$ contains an odd number of digits equal to $1$. The set $\mathcal{G}(M)$ can be partitioned so that $\mathcal{G}(M)=\mathcal{G}'(M)\sqcup \mathcal{G}''(M)$, where $\mathcal{G}'(M)$ consists of partitions $\lambda \in \mathcal{G}(M)$ such that the second digit of $b(\lambda)$ is $0$ and $\mathcal{G}''(M)$ consists of partitions $\lambda \in \mathcal{G}(M)$ such that the second digit of $b(\lambda)$ is $1$.

\smallskip
\noindent
 Given $\lambda \in \mathcal{G}'(M)$,  we remove the second digit of $b(\lambda)$, which is $0$,  to obtain the bit string $b(\mu)$ of a partition $\mu \in \mathcal{G}(M-1)$. Moreover, $\ell(\mu)=\ell(\lambda)-1$. This gives a bijection between $\mathcal{G}'(M)$ and 
$\mathcal{G}(M-1)$.  Thus, the total number of parts in all partitions in $\mathcal{G}'(M)$ equals $g(M-1)+\vert\mathcal{G}(M-1)\vert=g(M-1)+F_{M-1}$.

\smallskip
\noindent
 Given $\lambda \in \mathcal{G}''(M)$, since the second digit in $b(\lambda)$ is $1$, the third digit must also be $1$. We remove the second and third digits of $b(\lambda)$, which are both equal to $1$,  to obtain the bit string $b(\mu)$ of a partition $\mu \in \mathcal{G}(M-2)$. Moreover, $\ell(\mu)=\ell(\lambda)$. As a result, we obtain a length-preserving bijection between $\mathcal{G}''(M)$ and $\mathcal{G}(M-2)$.  The total number of parts in all partitions in $\mathcal{G}''(M)$ equals $g(M-2)$. So, equation \eqref{r1} holds for all $M\geq 2$. 
 
\bigskip 
\noindent
Next,  we prove the recurrence relation \eqref{r2} for $h(M)$:
$$ h(M)=h(M-1)+h(M-2)+F_{M-2}. $$
Recall that if $\lambda\in \mathcal{H}(M)$, then $b(\lambda)$ has no repeated $0$'s.
The set $\mathcal{H}(M)$ can be partitioned into $\mathcal{H}(M)=\mathcal{H}'(M)\sqcup \mathcal{H}''(M)$, where $\mathcal{H}'(M)$ consists of partitions $\lambda \in \mathcal{H}(M)$ such that the second digit of $b(\lambda)$ is $1$, and $\mathcal{H}''(M)$ consists of partitions $\lambda \in \mathcal{H}(M)$ such that the second digit of $b(\lambda)$ is $0$. 

\smallskip
\noindent
Given $\lambda \in \mathcal{H}'(M)$, we remove the second digit of $b(\lambda)$, which is $1$, to obtain the bit string $b(\mu)$ of a partition $\mu \in \mathcal{H}(M-1)$. Moreover, $\ell(\mu)=\ell(\lambda)$. This gives a length-preserving bijection between $\mathcal{H}'(M)$ and 
$\mathcal{H}(M-1)$.  Thus, the total number of parts of all partitions in $\mathcal{H}'(M)$ equals $h(M-1)$.

\smallskip

\noindent
Given $\lambda \in \mathcal{H}''(M)$, since the second digit is $0$, the third digit must be  $1$. We remove the second and third digits from $b(\lambda)$ to obtain the bit string $b(\mu)$ of a partition $\mu \in \mathcal{H}(M-2)$. Moreover, $\ell(\mu)=\ell(\lambda)-1$. This lends itself to a bijection between $\mathcal{H}''(M)$ and 
$\mathcal{H}(M-2)$.  The total number of parts of all partitions in $\mathcal{H}''(M)$ equals $h(M-2)+\vert \mathcal{H}''(M)\vert= h(M-2)+\vert\mathcal{H}(M-2)\vert= h(M-2)+F_{M-2}$. Therefore, equation \eqref{r2} holds for all $M\geq 2$.

\bigskip 
\noindent
Next,  we prove the recurrence relation \eqref{r3} for $g_1(M)$:
$$g_1(M)= g_1(M-1)+g_1(M-2)+F_{M-1}. $$
Recall that  $\mathcal{G}_1(M)$ is the set of partitions with perimeter $M$ and all parts odd except one even part (possibly repeated). 
Thus, if $\lambda \in \mathcal{G}_1(M)$, then $b(\lambda)$
 has a \textit{unique}  initial substring ending in $0$ and containing an even number of $1$'s.  For example, if $b(\lambda)=1110100$, then $\lambda \in \mathcal{G}_1(6)$ and the unique initial substring ending in $0$ with an even number of $1$'s is $11101$.   If  $b(\lambda)=1110000$, then there is no such initial substring. Thus, $\lambda$ has no even part and $\lambda \not\in \mathcal{G}_1(6)$. If   $b(\lambda)=111010110$, then there are two such substrings, $111010$ and $111010110$. Thus, $\lambda$ has two even parts and $\lambda \not\in \mathcal{G}_1(8)$. 

\medskip

\noindent
We partition $\mathcal{G}_1(M)$ into $\mathcal{G}_1(M)= \mathcal{G}'_1(M)\sqcup \mathcal{G}''_1(M)\sqcup \mathcal{G}'''_1(M)$, where $\mathcal{G}'_1(M)$ consists of partitions 
$\lambda \in \mathcal{G}_1(M)$ such that the second digit of $b(\lambda)$ is $0$, $\mathcal{G}''_1(M)$ consists of partitions $\lambda \in \mathcal{G}_1(M)$ such that the second and third digits of $b(\lambda)$ are both $1$'s, and $\mathcal{G}'''_1(M)$ consists of partitions $\lambda \in \mathcal{G}_1(M)$ such that the second digit of $b(\lambda)$ is $1$ and third digit is $0$. 
\medskip

\noindent
If $\lambda \in \mathcal{G}'_1(M)$, removing the second digit from $b(\lambda)$ we obtain the bit string $b(\mu)$ of a partition $\mu \in \mathcal{G}_1(M-1)$. This gives a bijection between $\mathcal{G}'_1(M)$ and $\mathcal{G}_1(M-1)$. Similarly, the transformation that removes the second and third digits from a bit string corresponding to a partition in $\mathcal{G}''_1(M)$ gives a bijection between $\mathcal{G}''_1(M)$ and $\mathcal{G}_1(M-2)$. Next, given a partition $\lambda \in \mathcal{G}'''_1(M)$, we denote by  $b_0(\lambda)$ the maximal substring of $0$'s starting with the third digit of $b(\lambda)$. Suppose $b_0(\lambda)$ has length $k$. Then $1\leq k\leq M-1$. 
If  $k<M-1$ then  removing the substring $b_0(\lambda)$ from $b(\lambda)$ we obtain a bit string $b(\mu)$ for a partition $\mu\in \mathcal{G}''(M-k)$, i.e.,  $b(\mu)\in \mathcal{G}(M-k)$ with third digit equal to $1$. As shown above, $\mathcal{G}''(M-k)$ is in bijection with $\mathcal{G}(M-k-2)$. There is exactly one partition $\lambda$ with $b_0(\lambda)$ of length $M-1$ and we can map this partition to the empty partition $\emptyset$. Thus, $\mathcal{G}'''_1(M)$ is in bijection with $\mathcal{G}(M-3)\sqcup\mathcal{G}(M-4)\sqcup \cdots \sqcup \mathcal{G}(1)\sqcup\{\emptyset\}$. Since $\vert\mathcal{G}(N)\vert=F_N$ for all $N$, we have 
\begin{align*} g_1(M)=& g_1(M-1)+g_1(M-2)+ F_{M-3}+F_{M-4}+\cdots +F_1+1\\  = & g_1(M-1)+g_1(M-2)+F_{M-1}. \qquad
\end{align*}

\bigskip 
\noindent
Finally,  we prove the recurrence relation \eqref{r4} for $h_1(M)$:
$$h_1(M)= h_1(M-1)+h_1(M-2)+F_{M-1}. $$
Recall that $\mathcal{H}_1(M)$ is the set of partitions with perimeter $M$ with one part repeated and all other parts distinct. If $\lambda \in \mathcal{H}_1(M)$ then either 
\begin{itemize}
\item[(i)] $b(\lambda)$ has a single substring $\bar b_0(\lambda)$ of length $2\leq k\leq M-2$ with all digits $0$'s having a prefix and suffix $1$ in $b(\lambda)$, or\smallskip

\item[(ii)] $b(\lambda)=1000\dots0$, i.e. in $b(\lambda)$  all digits except the first are equal to $0$. 

\end{itemize}

\noindent
We partition $\mathcal{H}_1(M)$ into $\mathcal{H}_1(M)= \mathcal{H}'_1(M)\sqcup \mathcal{H}''_1(M)\sqcup \mathcal{H}'''_1(M)$ where $\mathcal{H}'_1(M)$ consists of partitions 
$\lambda \in \mathcal{H}_1(M)$ such that the second digit of $b(\lambda)$ is $1$, $\mathcal{H}''_1(M)$ consists of partitions $\lambda \in \mathcal{H}_1(M)$ such that the second digit of $b(\lambda)$ is $0$ and the third digit is $1$, and $\mathcal{H}'''_1(M)$ consists of partitions $\lambda \in \mathcal{H}_1(M)$ such that the second and third digits of $b(\lambda)$ are both $0$. 
\medskip

\noindent
If $\lambda \in \mathcal{H}'_1(M)$, removing the second digit from $b(\lambda)$ we obtain the bit string $b(\mu)$ of a partition $\mu \in \mathcal{H}_1(M-1)$. This gives a bijection between $\mathcal{H}'_1(M)$ and $\mathcal{H}_1(M-1)$. Similarly, the transformation that removes the second and third digits from a bit string corresponding to a partition in $\mathcal{H}''_1(M)$ gives a bijection between $\mathcal{H}''_1(M)$ and $\mathcal{H}_1(M-2)$. Next, consider a partition $\lambda \in \mathcal{H}'''_1(M)$.  If $\lambda$ is in case (i) above, removing the substring $\bar b_0(\lambda)$ from $b(\lambda)$ we obtain a bit string $b(\mu)$ for a partition $\mu\in \mathcal{H}'(M-k)$, i.e.,  $b(\mu)\in \mathcal{H}(M-k)$ with second digit equal to $1$. As shown above, $\mathcal{H}'(M-k)$ is in bijection with $\mathcal{H}(M-k-1)$. Lastly, there is exactly one partition $\lambda$ in the case (ii) above, and we can map this partition to the empty partition $\emptyset$. Thus, $\mathcal{H}''_1(M)$ is in bijection with $\mathcal{H}(M-3)\sqcup\mathcal{H}(M-4)\sqcup \cdots \sqcup \mathcal{H}(1)\sqcup\{\emptyset\}$. Since $\vert\mathcal{H}(N)\vert=F_N$ for all $N$, we have 
\begin{align*} h_1(M)=& h_1(M-1)+h_1(M-2)+ F_{M-3}+F_{M-4}+\cdots +F_1+1\\  = & h_1(M-1)+h_1(M-2)+F_{M-1}. 
\end{align*}
 This completes the proof of  Theorem \ref{T5}. \end{proof}

\begin{remark} Theorem \ref{T5} implies Theorem \ref{T6}. We explain this for the sequence $h(M)$.  Let $E$ be the  \textit{forward shift operator}  given by $Ew(M)=w(M+1)$. In general, if a function $w(M)$ is annihilated by the linear operator $L:=a_0+a_1E+\cdots+a_rE^r$ then it is true that the ordinary generating function for $w(M)$ is a rational function whose denominator is the polynomial $a_r+\cdots+a_1x^{r-1}+a_0x^r$. The recurrence  \eqref{r2} can be reformulated as $(E^2-E-1)h(M-2)=F_{M-2}$. On the other hand, the Fibonacci sequence satisfies $(E^2-E-1)F_{M-2}=0$. It is now immediate 
$$(E^2-E-1)^2h(M)=(E^2-E-1)(E^2-E-1)h(M-2)=(E^2-E-1)F_{M-2}=0.$$
 In light of this, we may write
$$\sum_{M\geq1}h(M)x^M=\frac{b_0+b_1x+b_2x^2+b_3x^3}{(1-x-x^2)^2}$$
for  constants $b_0, b_1, b_2$ and $b_3$, which in turn are determined from  initial conditions. Since $h(0)=0, h(1)=1, h(2)=1, h(3)=3$, one can easily check that $$\sum_{M\geq1}h(M)\,x^M=\frac{x-x^2}{(1-x-x^2)^2},$$ as stated in Theorem \ref{T6}. \end{remark}

\section{Interpretation of $g(M)-h(M)$}

\bigskip
\noindent
Define the \it index of a partition $\lambda$ \rm to be $\ell(\lambda)-1-\lfloor \lambda_1/2\rfloor$, i.e., 
$$\text{(\# of parts of $\lambda$)} -1-\left\lfloor\frac{\text{largest part of $\lambda$}}2\right\rfloor.$$

 \begin{remark}Note that for a partition $\lambda$ into odd parts, the index of $\lambda$ is precisely the $M2$-rank of $\lambda$ as defined by Berkovich and Garvan \cite{BG}.  The $M2$-rank of a partition $\lambda$ is the rank of the partition $\mu$ whose Ferrers diagram has the same shape as the $2$-modular diagram of $\lambda$. \end{remark}
\noindent
\begin{theorem} \label{T7} $g(M)-h(M)$ is the sum of indices of all the partitions into odd parts with perimeter $M$. 
\end{theorem}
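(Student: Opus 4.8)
The plan is to compute the sum of indices directly and recognize it as the term-by-term difference of the closed formulas \eqref{A} and \eqref{B} already established for $g(M)$ and $h(M)$. The crucial observation is that, for $\lambda\in\mathcal{G}(M)$, the perimeter constraint $\lambda_1+\ell(\lambda)-1=M$ forces the number of parts to be completely determined by the largest part: since every part is odd we may write $\lambda_1=2n+1$, and then $\ell(\lambda)=M-2n$. Consequently the index of $\lambda$ collapses to a function of $n$ alone,
\[
\ell(\lambda)-1-\left\lfloor\frac{\lambda_1}{2}\right\rfloor=(M-2n)-1-n=M-3n-1.
\]

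Next I would count the partitions in $\mathcal{G}(M)$ with a prescribed largest part $2n+1$. Such a partition is specified by the multiplicities $m_1,m_3,\ldots,m_{2n+1}$ of the odd values up to $2n+1$, subject to $m_{2n+1}\geq 1$ and $\sum_{j=0}^{n}m_{2j+1}=\ell(\lambda)=M-2n$; a standard stars-and-bars count over these $n+1$ multiplicities gives $\binom{M-n-1}{n}$ such partitions. (Equivalently, this value can be read off from the derivation of \eqref{A}, where $\binom{M-n-1}{n}$ is precisely the number of perimeter-$M$ odd partitions with largest part $2n+1$ and the weight $M-2n$ is their common number of parts.) Summing the index over $\mathcal{G}(M)$ therefore yields
\[
\sum_{\lambda\in\mathcal{G}(M)}\left(\ell(\lambda)-1-\left\lfloor\tfrac{\lambda_1}{2}\right\rfloor\right)
=\sum_{0\leq 2n\leq M-1}(M-3n-1)\binom{M-n-1}{n}.
\]

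Finally I would match this against $g(M)-h(M)$. Writing both \eqref{A} and \eqref{B} with the common summation index $n$ (so $j=n$), the common range $0\leq 2n\leq M-1$, and the common binomial coefficient $\binom{M-n-1}{n}$, the two sums differ only in their linear weights: $g(M)$ carries $M-2n$ and $h(M)$ carries $n+1$. Subtracting term by term gives weight $(M-2n)-(n+1)=M-3n-1$, which is exactly the index weight computed above. Hence
\[
g(M)-h(M)=\sum_{0\leq 2n\leq M-1}(M-3n-1)\binom{M-n-1}{n}
=\sum_{\lambda\in\mathcal{G}(M)}\left(\ell(\lambda)-1-\left\lfloor\tfrac{\lambda_1}{2}\right\rfloor\right),
\]
which is the asserted identity.

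The argument is essentially immediate once the reduction is in place; the one point requiring care is verifying that \eqref{A} and \eqref{B} share the identical binomial $\binom{M-n-1}{n}$ and the identical summation range $0\leq 2n\leq M-1$, so that they may legitimately be subtracted index by index (and, relatedly, confirming that the number-of-parts weight in \eqref{A} really is $M-2n$, the common length forced by the perimeter). I expect this alignment of the two formulas, rather than any delicate combinatorial construction, to be the main obstacle. A purely bijective proof is conceivable—one could, for instance, try to interpret the positive index contributions against the excess $g(M)-h(M)$ via the bit-string surgery used in Section~5—but the formula comparison above is more transparent, so I would favor the analytic route.
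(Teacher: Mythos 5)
Your proposal is correct, and its skeleton matches the paper's proof: both reduce the claim to the identity
\begin{equation*}
\sum_{0\leq 2n\leq M-1}(M-3n-1)\binom{M-n-1}{n}
=\sum_{0\leq 2n\leq M-1}(M-2n)\binom{M-n-1}{n}-\sum_{0\leq 2n\leq M-1}(n+1)\binom{M-n-1}{n}
=g(M)-h(M),
\end{equation*}
using formulas \eqref{A} and \eqref{B}. The difference is in how the index-sum formula is obtained. The paper stays within the $q$-series framework of Section 4: it writes a three-variable generating function with $t$ tracking the index, extracts the coefficient of $z^M$ (which is $\sum_n q^M t^{M-3n-1}\left[\begin{smallmatrix} M-n-1\\ n\end{smallmatrix}\right]_{q^2}$), sets $q=1$, differentiates in $t$, and sets $t=1$. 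You instead argue by direct counting: fixing the largest part $2n+1$ and the perimeter $M$ forces exactly $M-2n$ parts, hence index $M-3n-1$, and a stars-and-bars count of multiplicities gives $\binom{M-n-1}{n}$ such partitions (your count checks out, including the range $0\leq 2n\leq M-1$). Your route is more elementary and makes explicit the structural fact that the index of an odd partition of fixed perimeter depends only on its largest part, which in the paper is visible only as the exponent $M-3n-1$ of $t$; the paper's route has the advantage of uniformity with the rest of Section 4 and would retain the $q$-refinement (size information) if one wanted it. Either way, the final term-by-term subtraction against \eqref{A} and \eqref{B} is legitimate, since both sums indeed carry the same binomial coefficient and the same range.
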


\noindent
\begin{example} $g(4)-h(4)=8-5=3$. The partitions in question are $3+1, 3+3$ and $1+1+1+1$ whose indices are $0, 0$ and $3$, respectively. Thus the sum of the indices is indeed equal to $3$.\end{example}

\newpage

\noindent
\begin{proof} The generating function for partitions with odd parts where the power of $q$ is the number partitioned, the power of $z$ is the perimeter and the power of $t$ is the index, takes the form 
$$\sum_{n\geq0}\frac{t^{-n}z^{2n+1}q^{2n+1}}{(tzq;q^2)_{n+1}}
=\sum_{n\geq0}t^{-n}z^{2n+1}q^{2n+1}\sum_{s\geq0}\begin{bmatrix} n+s\\n\end{bmatrix}_{q^2}(tzq)^s.$$
The coefficient of $z^M$ in the above is
$$\sum_{n\geq0}q^Mt^{M-3n-1}\begin{bmatrix} M-n-1\\n\end{bmatrix}_{q^2}.$$
To obtain the sum of all the indices, we must set $q=1$, differentiate with respect to $t$ and then set $t=1$. The result is the finite sum

\begin{align*}
& \sum_{n\geq0}(M-3n-1)\binom{M-n-1}n \\
= & \sum_{n\geq0}(M-2n)\binom{M-n-1}n-\sum_{n\geq0}(n+1)\binom{M-n-1}n \\
= & g(M)-h(M). 
\end{align*}
\end{proof}

\begin{remark}It follows from Theorem \ref{T7} and Corollary \ref{g-h} that the sum of the indices of all partitions of fixed perimeter with odd parts is non-negative.\end{remark} 

\section{Final remarks and questions}

\noindent
For $r\geq 2$, let $\mathcal{G}_r(M)$ be the set of all perimeter $M$ partitions with parts not divisible by $r$ and set $g_r(M)=|\mathcal{G}_r(M)|$. Let  and $\mathcal{H}_r(M)$  be the set of all perimeter $M$ partitions with parts occurring $<r$ times and set $h_r(M)=|\mathcal{H}_r(M)|$. It is easily checked that, for $r\geq 3$, the analogue of Glaisher's identity for partitions with fixed perimeter fails. Below, we find the generating functions for $h_r(M)$ and $g_r(M)$, as well as the generating functions for the number of partitions with fixed perimeter and parts congruent to $d$ modulo $r$. 

\noindent
\begin{theorem}\label{T7a} For $r\geq 2$, we have a generating function for $h_r(M)$ given by
$$\sum_{M\geq1}h_r(M)\,z^M=\frac{z+z^2+\cdots+z^{r-1}}{1-z-z^2-\cdots-z^r}.$$ \end{theorem}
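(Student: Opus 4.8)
The plan is to work entirely in the language of bit strings, which the paper has already set up for partitions of fixed perimeter, and to derive the generating function for $h_r(M)$ by first establishing a recurrence that captures the constraint ``every part occurs fewer than $r$ times.'' Recall that a partition $\lambda$ of perimeter $M$ corresponds to a bit string $b(\lambda)$ of length $M+1$ beginning with $1$ and ending with $0$, where a run of consecutive $0$'s records how many times a given part is repeated. Concretely, a maximal block of $k$ consecutive $0$'s in the interior of $b(\lambda)$ signals a part of multiplicity $k$, so the condition that every part occurs fewer than $r$ times translates into the combinatorial condition that $b(\lambda)$ contains no run of $r$ or more consecutive $0$'s. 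This is the key reformulation that turns the problem into a clean word-counting problem.

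First I would make the bit-string characterization precise: partitions in $\mathcal{H}_r(M)$ are in bijection with binary strings of length $M+1$ that start with $1$, end with $0$, and contain no $r$ consecutive $0$'s. Next I would peel off the leading $1$ and trailing $0$ and count the relevant strings by conditioning on the location of the leftmost $1$ after the initial $1$, equivalently on the length $j$ (with $1 \le j \le r-1$) of the initial run of $0$'s following that first $1$. Removing a leading block of the form $1\,0^{\,j-1}$ (with $1 \le j \le r-1$) from a valid string leaves a shorter valid string whose perimeter has dropped by $j$, which yields the recurrence
\begin{equation*}
h_r(M) = h_r(M-1) + h_r(M-2) + \cdots + h_r(M-r+1) + (\text{boundary term}),
\end{equation*}
where the boundary term accounts for the single string consisting of $1$ followed by $M$ zeros that is no longer admissible once $M \ge r$, and more carefully for the small cases. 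Translating this linear recurrence of order $r-1$ into a rational generating function is then routine: the denominator $1 - z - z^2 - \cdots - z^{r}$ encodes precisely the ``no $r$ consecutive $0$'s'' restriction, and the numerator $z + z^2 + \cdots + z^{r-1}$ records the allowed initial blocks together with the terminal $0$.

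An alternative and perhaps cleaner route, which I would present as the main argument, is a direct transfer-matrix or first-return decomposition on the bit strings, avoiding the bookkeeping of the boundary term. Here one writes a valid string uniquely as a concatenation of blocks, each of the form $1\,0^{\,i}$ with $0 \le i \le r-1$, followed by the terminal $0$; the generating function for a single interior block weighted by its length is $z + z^2 + \cdots + z^{r}$ (the $1$ contributes $z$, each trailing $0$ contributes another $z$), and summing the geometric series over arbitrarily many blocks gives the factor $1/(1 - z - z^2 - \cdots - z^r)$, while the remaining freedom in the final block together with the terminal $0$ produces the numerator $z + z^2 + \cdots + z^{r-1}$. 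Setting $r=2$ recovers the Fibonacci generating function $z/(1-z-z^2)$ for $h(M)$ from Theorem~\ref{T6}, which is a useful sanity check.

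The main obstacle I anticipate is getting the block decomposition exactly right at the two ends of the string: the forced leading $1$ and the forced trailing $0$ interact with the ``no run of $r$ zeros'' condition at the boundary, and an off-by-one error there will corrupt the numerator even if the denominator is correct. I would resolve this by carefully fixing a canonical parsing of each admissible string into blocks, verifying on the smallest cases $M = 1, \ldots, r$ that the decomposition and the claimed generating function agree term by term, and only then invoking the geometric-series sum. The denominator is structurally forced by the forbidden pattern and is the easy part; the delicate accounting lives entirely in pinning down the numerator, so that is where I would spend the care.
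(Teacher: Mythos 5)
Your main argument (the block decomposition of bit strings) is correct, but it is a genuinely different route from the paper's. You exploit the fact that in the bit-string encoding a maximal run of $k$ zeros records a part of multiplicity $k$, so $\mathcal{H}_r(M)$ becomes the set of strings of length $M+1$ starting with $1$, ending with $0$, avoiding $0^r$; parsing such a string uniquely into blocks $1\,0^{\,i}$ with $0\le i\le r-1$ then yields the denominator $1-z-\cdots-z^r$ from the geometric series and the numerator $z+\cdots+z^{r-1}$ from the final block (after the shift from length $M+1$ to perimeter $M$, which is exactly the bookkeeping you flag). The paper instead never leaves the partition language: it writes the bivariate generating function in $z$ (perimeter) and $q$ (size) by summing over the largest part $n$, giving the largest part multiplicity $1,\dots,r-1$ and every smaller part multiplicity $0,\dots,r-1$, then sets $q=1$, sums a geometric series to get $\frac{z-z^r}{1-2z+z^{r+1}}$, and cancels a factor of $1-z$. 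Your approach buys a structural explanation of the denominator (it is forced by the forbidden pattern $0^r$) and lands on the stated form directly with no cancellation; the paper's approach buys the refined generating function $\sum h_r(n,M)z^Mq^n$ keeping track of size, and is uniform with the $q\to 1$ technique used throughout the rest of the paper. One caveat: your preliminary ``recurrence'' route is off by one as written --- conditioning on the initial $0$-run (which may have length $0$ through $r-1$) gives an order-$r$ recurrence $h_r(M)=h_r(M-1)+\cdots+h_r(M-r)$ for $M\ge r$, matching the degree-$r$ denominator, not an order-$(r-1)$ one (check $r=2$: Fibonacci needs two back terms); since you promote the block decomposition to the main argument, this slip does not affect the validity of the proposal, but it should be corrected or dropped.
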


\begin{proof} In the bivariate generating function below, the exponent of $z$ keeps track of the perimeter and the exponent of $q$ keeps track of the size of the partition. If $h_r(n,M)$ denotes the number of partitions of $n$ with perimeter $M$ and having parts repeated less that $r$ times, then 
\begin{align*}& \sum_{n\geq0} \sum_{M\geq1}h_r(n,M) z^Mq^n=\\  &  \sum_{n\geq 1}z^{n-1}(zq^n+z^2q^{2n}+\cdots+ z^{r-1}q^{(r-1)n})\prod_{j=1}^{n-1}(1+zq^j+z^2q^{2j}+\cdots +z^{r-1}q^{(r-1)j}).\end{align*}

\newpage

\noindent Setting $q=1$ we obtain 

\begin{align*}\sum_{M\geq1}h_r(M)z^M &= \sum_{n\geq 1}z^{n-1} \cdot \frac{z-z^{r}}{1-z}\cdot \left(\frac{1-z^{r}}{1-z}\right)^{n-1}\\ & = \frac{z-z^{r}}{1-z}\cdot\frac{1}{1-\frac{z(1-z^r)}{1-z}}\\ & = \frac{z-z^r}{1-2z+z^{r+1}}.
\end{align*}
Since $1-z$ divides both the numerator and the denominator, we cancel and find $$\sum_{M\geq1}h_r(M)\,z^M=\frac{z+z^2+\cdots+z^{r-1}}{1-z-z^2-\cdots-z^r}.$$ 
\end{proof}

\begin{theorem} \label{T7b} For $r\geq 2$, we have a generating function for $g_r(M)$ given by
$$\sum_{M\geq1}g_r(M)\,z^M=\frac{z}{1-2z}\cdot\frac{(1-z)^{r-1}-z^{r-1}}{((1-z)^{r-1}-z^r)}.$$
\end{theorem}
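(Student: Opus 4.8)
The plan is to mimic the proof of Theorem~\ref{T7a}, building a bivariate generating function in which the exponent of $z$ records the perimeter and the exponent of $q$ records the size, and then specializing $q=1$. First I would condition on the largest part $n=\lambda_1$ of an $r$-regular partition $\lambda$ with perimeter $M$. Since the perimeter equals $n+\ell(\lambda)-1$, I split $z^{n+\ell(\lambda)-1}=z^{n-1}z^{\ell(\lambda)}$, carrying $z^{n-1}$ outside while each occurrence of a part contributes one factor of $z$. The largest part $n$ must satisfy $r\nmid n$ and occurs at least once; each smaller part $j$ with $r\nmid j$ occurs arbitrarily often; and parts divisible by $r$ are forbidden. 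Writing $g_r(n,M)$ for the number of partitions of $n$ with perimeter $M$ and parts not divisible by $r$, this gives
\begin{equation*}
\sum_{n\geq0}\sum_{M\geq1} g_r(n,M)\,z^Mq^n=\sum_{\substack{n\geq1\\ r\nmid n}} z^{n-1}\,\frac{zq^n}{1-zq^n}\prod_{\substack{j=1\\ r\nmid j}}^{n-1}\frac{1}{1-zq^j}.
\end{equation*}

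Next I would set $q=1$. Each factor $\tfrac{1}{1-zq^j}$ collapses to $\tfrac{1}{1-z}$ and the leading factor to $\tfrac{z}{1-z}$, so only the \emph{number} of allowed parts below $n$ matters. Writing $c(n)=\#\{1\le j\le n-1:\ r\nmid j\}=(n-1)-\lfloor (n-1)/r\rfloor$, the sum becomes $\tfrac{z}{1-z}\sum_{r\nmid n}z^{n-1}(1-z)^{-c(n)}$.

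The main obstacle is the floor function sitting in the exponent, which I would remove by parametrizing the summation index by its residue class: write $n=mr+s$ with $m\ge 0$ and $s\in\{1,\dots,r-1\}$, so that the condition $r\nmid n$ is exactly $s\neq 0$. Then $\lfloor (n-1)/r\rfloor=m$ and hence $c(n)=m(r-1)+(s-1)$, so the double sum factors as a product of a geometric series in $m$ with ratio $z^r(1-z)^{-(r-1)}$ and a finite geometric sum in $s$ with ratio $z(1-z)^{-1}$ and $r-1$ terms. Summing the former gives $\tfrac{(1-z)^{r-1}}{(1-z)^{r-1}-z^r}$, and the latter gives $\tfrac{((1-z)^{r-1}-z^{r-1})(1-z)}{(1-z)^{r-1}(1-2z)}$. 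Multiplying these by the prefactor $\tfrac{z}{1-z}$, the powers of $(1-z)$ cancel and leave exactly
\begin{equation*}
\sum_{M\geq1}g_r(M)\,z^M=\frac{z}{1-2z}\cdot\frac{(1-z)^{r-1}-z^{r-1}}{(1-z)^{r-1}-z^r},
\end{equation*}
as claimed. After the residue substitution the remaining work is only the routine summation of two geometric series and the cancellation of common factors.
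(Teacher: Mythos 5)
Your proposal is correct and follows essentially the same route as the paper: both condition on the largest ($r$-regular) part to build the bivariate generating function, set $q=1$, and split the summation index by residue class modulo $r$ to reduce everything to geometric series. The only cosmetic difference is that the paper writes the largest part as $nr+d$ from the outset (and encodes the restricted product via cancelling factors $(1-zq^{ir})$), whereas you introduce the parametrization $n=mr+s$ only after specializing $q=1$; the computations are otherwise identical.
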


\begin{proof} First, we keep track of the size of the partitions with exponent $q$ so that there comes a generating function
$$\sum_{n\geq0}\sum_{d=1}^{r-1}z^{nr+d}q^{nr+d}\prod_{j=1}^{nr+d}\frac1{1-zq^j}\prod_{i=1}^n(1-zq^{ir}).$$
Now, set $q=1$ to get
\begin{align*} \sum_{M\geq1}g_r(M)\,z^M&=\sum_{n\geq0}\sum_{d=1}^{r-1}\frac{z^{nr+d}}{(1-z)^{n(r-1)+d}} 
=\sum_{d=1}^{r-1}\left(\frac{z}{1-z}\right)^d\sum_{n\geq0}\left(\frac{z^r}{(1-z)^{r-1}}\right)^n \\
&=\frac{z}{1-z}\frac{1-\left(\frac{z}{1-z}\right)^{r-1}}{1-\frac{z}{1-z}} \frac1{1-\frac{z^r}{(1-z)^{r-1}}}.
\end{align*}
This simplifies to the desired formulation as depicted in the theorem.
\end{proof}

\begin{theorem} \label{T8} Let $r\geq 2$ and $1\leq d<r$.  Denote by $\mathcal{G}_r^{(d)}(M)$  the set of partitions with perimeter $M$ and having parts congruent to $d$ modulo $r$, and let's denote $g_r^{(d)}(M)=|\mathcal{G}_r^{(d)}(M)|$. Then,  the generating function for $g_r^{(d)}(M)$ is 
$$\sum_{M\geq1}g_r^{(d)}(M)\,z^M=\frac{z^d}{1-z-z^r}.$$ 
In particular, $g_r^{(d)}(M)=g_r^{(d-1)}(M-1)$ for each $1\leq d<r$. \end{theorem}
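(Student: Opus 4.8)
The plan is to mirror the bivariate generating-function method used in the proofs of Theorems~\ref{T7a} and~\ref{T7b}: let the exponent of $z$ record the perimeter and the exponent of $q$ record the size, build the series by conditioning on the largest part, and then specialize $q=1$. The organizing observation is that the perimeter of $\lambda$ factors as $\lambda_1+\ell(\lambda)-1$, so I would split the $z$-weight of each partition into a prefactor $z^{\lambda_1-1}$ attached to the largest part together with one factor of $z$ for each of the $\ell(\lambda)$ parts. Because every part is $\equiv d\pmod r$, the admissible part sizes are $d,d+r,d+2r,\dots$, and I would index the sum by writing the largest part as $mr+d$ with $m\ge 0$.

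For a fixed largest part $mr+d$, the prefactor $z^{mr+d-1}$ is produced, and the remaining structure is a partition whose parts lie in $\{d,d+r,\dots,mr+d\}$, with the value $mr+d$ occurring at least once, each part carrying a factor $z$ (perimeter) and $q^{\text{size}}$ (size). Summing over $m$ I would obtain
\begin{align*}
\sum_{n,M}g_r^{(d)}(n,M)\,z^Mq^n
=\sum_{m\ge 0} z^{mr+d-1}\,\frac{zq^{mr+d}}{1-zq^{mr+d}}\prod_{i=0}^{m-1}\frac{1}{1-zq^{ir+d}}.
\end{align*}
Setting $q=1$ collapses each factor $1/(1-zq^{ir+d})$ to $1/(1-z)$ and the ``at least once'' factor to $z/(1-z)$, so the right-hand side becomes $\sum_{m\ge 0} z^{mr+d}/(1-z)^{m+1}$. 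This is a geometric series in $z^r/(1-z)$, which sums to $\frac{z^d}{1-z}\cdot\frac{1-z}{1-z-z^r}=\frac{z^d}{1-z-z^r}$, establishing the stated formula.

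The ``in particular'' assertion is then immediate from the closed form, since $\frac{z^d}{1-z-z^r}=z\cdot\frac{z^{d-1}}{1-z-z^r}$; multiplying a generating function by $z$ shifts its coefficients up by one index, giving $g_r^{(d)}(M)=g_r^{(d-1)}(M-1)$ (with $g_r^{(0)}$ interpreted as the count for parts divisible by $r$ when $d=1$).

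I expect the only delicate point to be the perimeter bookkeeping. One must enforce that the largest part equals $mr+d$ \emph{exactly}, which is why the largest size contributes the ``at least one'' factor $\frac{zq^{mr+d}}{1-zq^{mr+d}}$ rather than $\frac{1}{1-zq^{mr+d}}$, and one must correctly absorb the $-1$ of $\lambda_1+\ell(\lambda)-1$ into the single prefactor $z^{\lambda_1-1}$. Once the weights are assigned correctly, the passage to $q=1$ and the geometric summation are entirely routine.
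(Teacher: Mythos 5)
Your main computation is correct and follows essentially the same route as the paper: both arguments condition on the largest part $mr+d$, assign it the full weight $z^{\lambda_1}q^{\lambda_1}$ (your split into a prefactor $z^{\lambda_1-1}$ plus one $z$ per part is the same bookkeeping), and then set $q=1$. The only cosmetic difference is that the paper expands the product over the smaller parts into Gaussian binomials $\begin{bmatrix} n+s\\ n\end{bmatrix}_{q^r}$ before specializing, whereas you keep the infinite-product form and sum a single geometric series at $q=1$; the two are related by the $q$-binomial theorem and the resulting algebra is equally routine.

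There is, however, one genuine error: your parenthetical resolution of the $d=1$ case of the shift identity. If $g_r^{(0)}(M)$ is interpreted as the number of perimeter-$M$ partitions into parts divisible by $r$, then $g_r^{(1)}(M)=g_r^{(0)}(M-1)$ is \emph{false}. Running your own argument with parts drawn from $\{r,2r,3r,\dots\}$ gives the generating function $\sum_{m\geq 1} z^{mr}/(1-z)^{m}=\frac{z^r}{1-z-z^r}$, not $\frac{1}{1-z-z^r}$. Concretely, for $r=2$ and $M=4$ one has $g_2^{(1)}(4)=3$, counting $(3,1)$, $(3,3)$, $(1,1,1,1)$, while the only partition into even parts with perimeter $3$ is $(2,2)$. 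The shift identity at $d=1$ holds only if $g_r^{(0)}$ is \emph{defined} formally by the closed form $\frac{z^0}{1-z-z^r}$; otherwise the assertion should be restricted to $2\leq d<r$, which is in any case all that the paper's own proof (and yours) actually establishes.
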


\begin{proof} We mimic  the proof of the generating function for $g(M)$ in Theorem \ref{T5}. In the bivariate generating function below, the exponent of $z$ keeps track of the perimeter and the exponent of $q$ keeps track of the size of the partition. Then, if $g_r^{(d)}(n,M)$ denotes the number of partitions of $n$ with perimeter $M$ having parts congruent to $d$ modulo $r$, we have 

$$\sum_{n\geq0} \sum_{M\geq1}g_r^{(d)}(n,M)z^Mq^n=\sum_{n\geq0}z^{nr+d}q^{nr+d}\sum_{s\geq0}\begin{bmatrix}n+s\\n\end{bmatrix}_{q^r}(zq)^s.$$ 
\medskip

\noindent Setting $q=1$ we obtain 
\begin{align*}\sum_{M\geq1}g_r^{(d)}(M)z^M &= z^d\sum_{s\geq0}z^{s}\sum_{n\geq0}\binom{n+s}{n}(z^r)^n 
=  z^d\sum_{s\geq0}\frac{z^s}{(1-z^r)^{s+1}} \\
&=\frac{z^d}{1-z^r}\cdot \frac{1}{1-\frac{z}{1-z^r}}=\frac{z^d}{1-z-z^r}.\end{align*}
\end{proof}
\begin{remark} If $r=2$, from Theorems \ref{T7a} and \ref{T8} we obtain $g_2^{(1)}(M)=h_2(M)$, which is  Theorem \ref{euler}. 
\end{remark}

\smallskip
\noindent
We conclude by noting that, for fixed $M\geq 1$ and $r\geq 2$, one should be able to verify that  $g_r(M)\leq h_r(M)$  using  the  generating functions from Theorems \ref{T7a} and \ref{T7b}; for instance, by comparing the largest real roots of the denominator polynomials. However, this is likely tedious and not very illuminating. Instead, it would be more appealing to give a positive answer to the following question. 

\medskip
\noindent
\bf Question. \it  Is there a combinatorial proof that $g_r(M)\leq h_r(M)$  for all integers $M\geq 1$ and $r\geq 2$? \rm

\end{document}